\newtheorem{theorem}{Theorem}
\newtheorem{lemma}[theorem]{Lemma}
\newtheorem{corollary}[theorem]{Corollary}
\newcommand{\Aa}{\mathcal{A}}
\newcommand{\xx}{\mathcal{X}}
\newcommand{\yy}{\mathcal{Y}}
\newcommand{\Gg}{\mathcal{G}}
\newcommand{\ff}{\mathcal{F}}
\newcommand{\R}{\mathbb{R}}
\newcommand{\mb}[1]{\mathbf{#1}}
\begin{document}

\title{The Bilinear Assignment Problem: Complexity and polynomially solvable special cases}

\author{
\sc{Ante \'Custi\'c}\thanks{{\tt acustic@sfu.ca}.
Department of Mathematics, Simon Fraser University Surrey, 250-13450 102nd AV, Surrey, British Columbia, V3T 0A3, Canada}
\and
\sc{Vladyslav Sokol}\thanks{{\tt vsokol@sfu.ca}.
School of Computer Science, Simon Fraser University, 8888 University Drive, Burnaby, British Columbia, V5A 1S6, Canada}
\and
\sc{Abraham P. Punnen}\thanks{{\tt apunnen@sfu.ca}. Department of Mathematics, Simon Fraser University Surrey,  250-13450 102nd AV, Surrey, British Columbia, V3T 0A3, Canada}
\and
\sc{Binay Bhattacharya}\thanks{{\tt binay@sfu.ca}.
School of Computer Science, Simon Fraser University, 8888 University Drive, Burnaby, British Columbia, V5A 1S6, Canada}}

\maketitle

\begin{abstract}
In this paper we study the {\it bilinear assignment problem} (BAP) with size parameters $m$ and $n$,  $m\leq n$.  BAP is a generalization of the well known quadratic assignment problem and the three dimensional assignment problem and hence NP-hard. We show that BAP cannot be approximated within a constant factor unless P=NP even if the associated quadratic cost matrix $Q$ is diagonal. Further, we show that BAP remains  NP-hard if $m = O(\sqrt[r]{n})$, for some fixed $r$, but is solvable in polynomial time if $m = O(\sqrt{\log n})$. When the rank of $Q$ is fixed, BAP is observed to admit FPTAS and when this rank is one, it is solvable in polynomial time under some additional restrictions. We then provide a necessary and sufficient condition for BAP to be equivalent to two linear assignment problems. A closed form expression to compute the average of the objective function values of all solutions is presented, whereas the median of the solution values cannot be identified in polynomial time, unless P=NP. We then provide polynomial time heuristic algorithms that find a solution with objective function value no worse than that of $(m-1)!(n-1)!$ solutions. However, computing a solution whose objective function value is no worse than that of $m!n!-\lceil\frac{m}{\beta}\rceil !\lceil\frac{n}{\beta}\rceil !$ solutions is NP-hard for any fixed rational number $\beta>1$.
\medskip

\noindent\emph{Keywords:} Bilinear assignment problem, quadratic assignment, bilinear programs, domination analysis, heuristics, polynomially solvable cases, linearization.
\end{abstract}

\section{Introduction}\label{sec:intro}

 Let  $Q=(q_{ijk\ell})$ be an  $m\times m\times n\times n$ array,  $C=(c_{ij})$ be an $m\times m$ matrix, and  $D=(d_{k\ell})$ be an $n\times n$ matrix. Then the {\it bilinear assignment problem} (BAP) is to
\begin{align}
	\text{Minimize} \qquad &\sum_{i=1}^m\sum_{j=1}^m\sum_{k=1}^n\sum_{\ell=1}^n q_{ijk\ell}x_{ij}y_{k\ell} + \sum_{i=1}^m\sum_{j=1}^m c_{ij}x_{ij} + \sum_{k=1}^n\sum_{\ell=1}^n d_{k\ell}y_{k\ell}  \nonumber\\
	\text{subject to}\quad \  \ & \sum_{j=1}^m x_{ij}=1 \qquad \qquad i=1,2,\ldots,m, \label{x1}\\
	&\sum_{i=1}^m x_{ij}=1 \qquad \qquad j=1,2,\ldots,m, \label{x2}\\
	&\sum_{\ell=1}^n y_{k\ell}=1 \qquad \qquad k=1,2,\ldots,n, \label{y1}\\
	&\sum_{k=1}^n y_{k\ell}=1 \qquad \qquad \ell=1,2,\ldots,n, \label{y2}\\
	&x_{ij},\ y_{k\ell}\in \{0,1\} \qquad  i,j=1,\ldots,m,\ \ k,\ell=1,\ldots,n. \label{int}
\end{align}

Let $\xx$ be the set of all $m\times m$  0-1 matrices satisfying \eqref{x1} and \eqref{x2} and $\yy$ be the set of all $n \times n$ 0-1 matrices satisfying \eqref{y1} and \eqref{y2}. Also, let $\ff$ be the set of all feasible solutions of BAP. Note that $|\ff|=m!n!$. An instance of the BAP is fully defined by the 3-tuple of cost arrays $(Q,C,D)$. Let $M=\{1,2,\ldots,m\}$ and $N=\{1,2,\ldots,n\}$. Without loss of generality we assume that $m\leq n$. The objective function of BAP is denoted by $f(\mathbf{x},\mathbf{y})$ where $\textbf{x}=(x_{ij})\in\xx$ and $\mathbf{y}=(y_{ij})\in\yy$. The quadratic part of the objective function, i.e.\@ $\sum_{i=1}^m\sum_{j=1}^m\sum_{k=1}^n\sum_{\ell=1}^n q_{ijk\ell}x_{ij}y_{k\ell}$, is denoted by $\bar{f}(\mb{x},\mb{y})$. It may be noted that in BAP, constraints (\ref{int}) can be replaced by $0\leq x_{ij}\leq 1$ and $0\leq y_{k\ell}\leq 1$ for $i,j=1,\ldots,m,$ and $ k,\ell=1,\ldots,n.$ This justifies the name bilinear assignment problem \cite{A68, K76}.
\medskip

BAP is closely related to the well known {\it quadratic assignment problem} (QAP) \cite{C98}. Let $Q^{\prime}=(q^{\prime}_{ijk\ell})$  be an $n\times n\times n\times n$ array. Then the QAP is defined as
\begin{align}
	\text{Minimize} \qquad &\sum_{i=1}^n\sum_{j=1}^n\sum_{k=1}^n\sum_{\ell=1}^n q^{\prime}_{ijk\ell}x_{ij}x_{k\ell} \nonumber\\
	\text{subject to}\quad \  \ & \sum_{j=1}^n x_{ij}=1 \qquad \qquad i=1,2,\ldots,n, \nonumber\\
	&\sum_{i=1}^n x_{ij}=1 \qquad \qquad j=1,2,\ldots,n, \nonumber\\
	&x_{ij}\in \{0,1\} \qquad \qquad i,j=1,\ldots,n. \nonumber
\end{align}

Note that $Q^{\prime}$ could be viewed as an $n^2\times n^2$ matrix. Let $Q^{\prime\prime}=Q^{\prime}+\alpha I$ where $I$ is the $n^2\times n^2$  identity matrix. It is well known that the optimal solution set for the QAP with cost matrix $Q^{\prime}$ and $Q^{\prime\prime}$ are identical. By choosing $\alpha$ appropriately, we could make $Q^{\prime\prime}$ either positive semidefinite or negative semidefinite. Thus without loss of generality we could assume that $Q^{\prime}$ in QAP is positive semidefinite~\cite{Burkard-book,C98}. Under this assumption, if $(\mb{x}^*,\mb{y}^*)$ is an optimal solution to the BAP instance $(Q^{\prime},O^n,O^n)$, where $O^n$ is an $n\times n$ zero matrix, then both $\mb{x}^*$ and $\mb{y}^*$ are optimal solutions to the QAP~\cite{x1,K76}. Thus BAP is a generalization of QAP.

Furthermore, the \emph{axial three-dimensional assignment problem} (3AP)~\cite{S00} is also a special case of BAP. Consider the $n\times n\times n$ cost array $A=(a_{ijk})$.  Then the 3AP is to find $\mb{x},\mb{y}\in \yy$ such that
\[
	\sum_{i=1}^n\sum_{j=1}^n\sum_{k=1}^n a_{ijk}x_{ij}y_{jk}
\]
is minimized.
Now it is easy to see that every 3AP instance with $A=(a_{ijk})$ can be reduced to a BAP instance $(Q,O^n,O^n)$ where $q_{ijk\ell}$ is equal to $a_{ij\ell}$ if $j=k$, and 0 otherwise. The above reduction of 3AP to BAP is a special case of a more general problem considered by Frieze~\cite{fr1}.
\medskip

Being a generalization of the well studied problems QAP and 3AP, applications of these models translate into applications of BAP. Further, Zikan~\cite{z2} used a model equivalent to BAP to solve track initialization in the multiple-object tracking problem, Tsui and Chang \cite{TC90, TC92} used BAP to model a dock door assignment problem, and Torki, Yajima and Enkawa~\cite{TYE96} used BAP to obtain heuristic solutions to QAP with a small rank $Q$. Later in this paper we show that the well known \emph{disjoint matchings problem}~\cite{F83} can be formulated as a BAP with special structure.
\medskip

To the best of our knowledge, there are no other works that have been reported in the literature on BAP. Thus it is interesting and relevant to investigate fundamental structural properties of the problem, which is the primary focus of our paper. More specifically, we study complexity, approximability, and polynomially solvable special cases of BAP.
\medskip

The paper is organized as follows.
In Section~\ref{sec:comp} we investigate the complexity of BAP and prove that it is NP-hard if $m= O(\sqrt[r]{n})$, for some fixed $r$, but is polynomially solvable if $m= O(\sqrt{\log n})$. Note that QAP is polynomially solvable if $Q^{\prime}$ is diagonal, but we show that BAP is NP-hard even if $Q$ is diagonal and $n=m$. Moreover, such BAP instances do not admit a polynomial time $\alpha$-approximation algorithm for any fixed $\alpha>1$, unless P=NP. Section~\ref{sec:comp} also deals with some special classes of BAP that are solvable in polynomial time. In particular, we provide an algorithm to solve BAP when  $Q$, observed as a matrix, is of rank one, and either $C$ or $D$ is a sum matrix (i.e., either $c_{ij}$'s or $d_{ij}$'s are of the form $s_i + t_j$, for some vectors $S=(s_i)$ and $T=(t_i)$).
Furthermore, we show that a BAP instance is equivalent to two linear assignment problems, if and only if $Q$ is of the form $q_{ijk\ell}=e_{ijk}+f_{ij\ell}+g_{ik\ell}+h_{jk\ell}$, for a natural definition of equivalancy. In this case, the problem can be solved in $O(n^3)$ time. In Section~\ref{sec:approx}, we analyze various methods for approximating an optimal solution. We show that there is a FPTAS for BAP if the rank of $Q$ is fixed, and present a rounding procedure that preserves the objective value.
Computing a solution whose objective function value is no worse than that of $m!n!-\lceil\frac{m}{\beta}\rceil !\lceil\frac{n}{\beta}\rceil !$ solutions is observed to be NP-hard for any fixed rational  number $\beta>1$.
We also obtain a closed form expression for computing the average of the objective function values of all feasible solutions, and present methods for finding a solution with the objective value guaranteed to be no worse than the average value. Such solutions are shown to have objective function value no worse than $(m-1)!(n-1)!$  alternative solutions. Finally, we note that a median solution cannot be found in polynomial time, unless P=NP. Here the median denotes a solution which is in the middle of the list of all feasible solutions sorted by their objective function value. Concluding remarks are given in Section~\ref{conclusion}.


\section{Complexity and polynomially solvable cases}\label{sec:comp}

Since BAP is a generalization of QAP, it is clearly strongly NP-hard.  It is well known that for any $\alpha >1$ the existence of a polynomial time $\alpha$-approximation algorithm for QAP implies P=NP~\cite{sago}.   The reduction from QAP to BAP discussed in the introduction section need not preserve approximation ratio. Let us now discuss another reduction from QAP to BAP that partially preserves approximation ratio.

Suppose $m=n$ and impose the additional restriction in BAP that $x_{ij}=y_{ij}$ for all $i,j$. The resulting problem is equivalent to QAP. Constraints $x_{ij}=y_{ij}$ can be enforced by modifying the entries of $Q,$ $C$ and $D$ without explicitly stating the constraints or changing the objective function values of the solutions that satisfy the new constants. For every $i,j$ we can change $c_{ij}$ to $c_{ij}+L$, $d_{ij}$ to $d_{ij}+L$ and $q_{ijij}$ to $q_{ijij}-2L$, for some large $L$. This will increase the objective value by
$\sum_{i,j=1}^n L(x_{ij}-2x_{ij}y_{ij}+y_{ij})=\sum_{i,j=1}^nL(x_{ij}-y_{ij})^2$, which forces $x_{ij}=y_{ij}$ in an optimal solution. Since the reduction described above preserves the objective values of the solutions that satisfy $x_{ij}=y_{ij}$, BAP inherits the approximation hardness of  QAP. That is, for any $\alpha > 1$, BAP does not have a polynomial time $\alpha$-approximation algorithm, unless P=NP.

More interestingly we now show that this non-approximability result for BAP  extends to the case even if $Q$ is a diagonal matrix. Note that QAP is polynomially solvable when $Q^{\prime}$ is diagonal.

\begin{theorem}\label{tc2}
BAP is NP-hard even if $Q$ is a diagonal matrix. Further,  for any $\alpha >1$ the existence of a polynomial time $\alpha$-approximation algorithm for BAP when $Q$ is a diagonal matrix implies P=NP.
\end{theorem}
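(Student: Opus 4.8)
The plan is to prove both assertions at once by exhibiting a single \emph{gap-producing} reduction: I will map instances of a strongly NP-hard three-dimensional problem (the axial 3AP, or equivalently three-dimensional matching, which the introduction already realises as a BAP special case) to BAP instances with diagonal $Q$ so that ``yes'' instances admit a solution of objective value $0$, while every solution of a ``no'' instance has value at least $1$. Once such a gap is in place, NP-hardness is immediate, and for any fixed $\alpha>1$ an $\alpha$-approximation algorithm would have to return a solution of value at most $\alpha\cdot 0=0$ on ``yes'' instances yet necessarily report value at least $1$ on ``no'' instances, thereby deciding the source problem in polynomial time and forcing P=NP.

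To build the instance I take $m=n$ and read a feasible $(\mb{x},\mb{y})$ as a pair of permutations $\sigma,\tau$, where $x_{ij}=1$ iff $\sigma(i)=j$ and $y_{ij}=1$ iff $\tau(i)=j$. Since $Q$ is diagonal, its only effect on the objective is the coincidence term $\sum_{i}[\sigma(i)=\tau(i)]\,q_{i\sigma(i)i\sigma(i)}$, so the two permutations are coupled \emph{only} where they agree. The remaining modelling power lives in the linear matrices $C$ and $D$: by setting selected entries to a large constant $L$ I can forbid individual positions of $\sigma$ and of $\tau$, restricting each permutation to the ``legal'' choices dictated by the source instance while leaving every legal position at cost $0$. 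The quadratic diagonal is then used purely as a consistency check: positions corresponding to a genuine (consistent) selection receive weight $0$, whereas any pair of choices that violates a constraint of the source instance is arranged to force a coincidence at a position of strictly positive weight, contributing at least $1$.

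The inapproximability statement then follows from the gap exactly as sketched above. I expect the main obstacle to be the design of this gadget. The difficulty is structural: a diagonal $Q$ can couple $\sigma$ and $\tau$ only through events of the form $\sigma(i)=\tau(i)$, i.e.\ agreement on the \emph{same} value $j$, which is a binary, symmetric signal, whereas the constraints of a three-dimensional matching are ternary and relate values living in different coordinates. Bridging this gap requires an index expansion that encodes a candidate triple of the source instance into a single coordinate $j$ shared by both permutations, so that a coincidence at $j$ faithfully certifies ``this triple is selected and consistent''; the $L$-penalties in $C$ and $D$ must then be tuned so that the only way to incur no positive coincidence cost is to realise an actual solution of the source instance. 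The delicate points are verifying both directions under nonnegativity of all costs—that a valid solution of the source instance yields a BAP solution of value exactly $0$, and that every BAP solution of a ``no'' instance is forced to pay at least $1$—and checking that the construction has size polynomial in that of the source instance.
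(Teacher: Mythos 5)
Your high-level strategy is sound and in fact mirrors the paper's own: with a diagonal $Q$ the quadratic part collapses to the coincidence sum $\sum_{i} q_{i\sigma(i)i\sigma(i)}\,[\sigma(i)=\tau(i)]$, the linear matrices $C$ and $D$ restrict $\sigma$ and $\tau$ to prescribed position sets, and a value gap (tiny versus $\geq 1$) kills $\alpha$-approximation. But the proposal has a genuine gap at exactly the point you flag yourself: the gadget translating the \emph{ternary} constraints of 3AP/3DM into \emph{binary} coincidence constraints is never constructed. Observe what a value-$0$ solution of your intended instance is: a permutation $\sigma$ supported on the zero entries of $C$, a permutation $\tau$ supported on the zero entries of $D$, with no coincidence on a positively weighted diagonal position. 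Deciding whether such a pair exists is precisely the DISJOINT MATCHINGS problem (with $E_1$, $E_2$ the zero-position sets of $C$ and $D$), which is the problem Frieze proved NP-complete in 1983, with a simpler proof later given by Fon-Der-Flaass. So completing your reduction from 3DM would amount to reproving Frieze's theorem; that construction is the entire mathematical content of the hardness claim, it is nontrivial, and your proposal explicitly defers it (``I expect the main obstacle to be the design of this gadget''). Note also that the introduction's embedding of 3AP into BAP, which you invoke, does not produce a diagonal $Q$ (it places $a_{ij\ell}$ at positions with $j=k$), so it cannot be reused here. The paper sidesteps all of this by taking DISJOINT MATCHINGS itself as the source problem and citing its known NP-completeness; the diagonal encoding is then immediate: $Q$ the identity, $c_{ij}=0$ on $E_1$ and $1$ off it, $d_{ij}=0$ on $E_2$ and $1$ off it, so the quadratic term simply counts shared edges.

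A secondary issue: your inapproximability argument rests on yes-instances having optimum exactly $0$, so that an $\alpha$-approximation must return value $\leq \alpha\cdot 0=0$. This works under the usual convention but is fragile, since ratio guarantees are vacuous or ill-defined at optimum $0$ under some definitions. The paper engineers a strictly positive optimum instead: the $E_1$-entries of $C$ in row $1$ are set to $\frac{1}{\alpha+1}$, so a yes-instance has optimum exactly $\frac{1}{\alpha+1}>0$ while every other solution value is at least $1$; since $\alpha\cdot\frac{1}{\alpha+1}<1$, an $\alpha$-approximation still separates the two cases. If you do complete your gadget, you should add this (or a similar) perturbation so that the multiplicative gap is genuine rather than an artifact of a zero optimum.
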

\begin{proof}
To prove the theorem, we reduce the DISJOINT MATCHINGS problem to BAP. Input of the problem is given by a complete bipartite graph $K_{n,n}=(V_1,V_2,E)$ and two sets of edges $E_1,E_2 \subseteq E$. The goal is to decide if there exist $M_1\subseteq E_1$ and $M_2\subseteq E_2$, such that $M_1$ and $M_2$ are perfect matchings and $M_1\cap M_2=\emptyset$. DISJOINT MATCHINGS is shown to be NP-complete by Frieze~\cite{F83}. A simpler proof can be found in \cite{F97}.

Now let us assume that for some $\alpha>1$ there exists a polynomial time $\alpha$-approximation algorithm for BAP when $Q$ is a diagonal matrix. Consider an instance of DISJOINT MATCHINGS defined on $K_{n,n}=(V_1,V_2,E)$, the edge set $E_1$ and the edge set $E_2$, where $V_1=\{v_1,v_2,\ldots,v_n\}$ and $V_2=\{u_1,u_2,\ldots,u_n\}$. Let $(Q,C,D)$ be an instance of BAP where the $n\times n\times n \times n$ array $Q=(q_{ijk\ell})$ is the identity matrix when observed as an $n^2\times n^2$ matrix (i.e., $q_{ijk\ell}=1$ if $i=k$, $j=\ell$, and $0$ otherwise). Further, let $C=(c_{ij})$ and $D=(d_{ij})$ be $n\times n$ matrices given by
\begin{equation}\label{ered}
c_{ij}=\begin{cases}
	\frac{1}{\alpha+1} & \text{if } (v_i,u_j)\in E_1 \text{ and } i=1,\\
	0 & \text{if } (v_i,u_j)\in E_1  \text{ and } i\neq 1,\\
	1 & \text{otherwise,}
\end{cases}
\quad \text{ and } \quad
d_{ij}=\begin{cases}
	0 & \text{if } (v_i,u_j)\in E_2,\\
	1 & \text{otherwise.}
\end{cases}
\end{equation}

Now we show that our DISJOINT MATCHINGS instance is a ``yes" instance, if and only if there exists a solution $(\mb{x}^*,\mb{y}^*)$ of BAP on the instance $(Q,C,D)$ with the objective value $\frac{1}{\alpha+1}$. Assume that $(\mb{x}^*,\mb{y}^*)\in \ff$ is such that $f(\mb{x}^*,\mb{y}^*)=\frac{1}{\alpha+1}$. Since all costs are non-negative, and $c_{1j}\geq\frac{1}{\alpha+1}$ for all $j=1,\ldots,n$, it follows that $\sum_{i=1}^n\sum_{j=1}^n c_{ij}x^*_{ij}=\frac{1}{\alpha+1}$, which implies that $\mb{x}^*$ corresponds to a perfect matching $S_1$ which is a subset of $E_1$. Similarly, it must be that  $\sum_{i=1}^n\sum_{j=1}^n d_{ij}y^*_{ij}=0$, hence $\mb{y}^*$ corresponds to a perfect matching $S_2$ which is a subset of $E_2$. Furthermore, $f(\mb{x}^*,\mb{y}^*)=\frac{1}{\alpha+1}$ implies that $\sum_{i=1}^n\sum_{j=1}^n\sum_{k=1}^n\sum_{\ell=1}^n q_{ijk\ell}x^*_{ij}y^*_{k\ell}=0$. Then since $Q$ is an identity matrix, it follows that $S_1$ and $S_2$ are disjoint, hence our DISJOINT MATCHINGS instance is a ``yes" instance. Conversely, if we assume that our DISJOINT MATCHINGS instance is a ``yes" instance, then by the same arguments but in opposite direction, it follows that an optimal solution of the BAP instance $(Q,C,D)$ has the objective value equal to $\frac{1}{\alpha+1}$.

For every BAP instance $(Q,C,D)$ obtained from a DISJOINT MATCHINGS instance using the reduction described above, if the objective value of an arbitrary feasible solution is not $\frac{1}{\alpha+1}$, it will be at least $1$. Since $\alpha\frac{1}{\alpha+1}<1$, our $\alpha$-approximation algorithm for the BAP with diagonal matrices $Q$ can be used for solving the NP-complete DISJOINT MATCHINGS problem, which implies that P=NP.
\end{proof}

If we replace costs $\frac{1}{\alpha+1}$ in \eqref{ered} with $0$, we get a reduction from DISJOINT MATCHINGS to BAP instances with 0-1 costs. Therefore, BAP is NP-hard even when restricted to instances with 0-1 costs where $Q$ is the identity matrix.

Let us now examine the impact of the ratio between $m$ and $n$ on the tractability of BAP.
\begin{theorem}
	If $m= O(\sqrt{\log n})$ then BAP can be solved in polynomial time. However, for any fixed $r$, if $m=O(\sqrt[r]{n})$, then BAP is NP-hard.
\end{theorem}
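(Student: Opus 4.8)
The plan is to treat the two assertions separately: a fix-and-optimize argument gives the polynomial case, and a padding reduction gives the hardness case.

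For the polynomial part, my starting observation is that BAP becomes a linear problem in $\mb{y}$ once $\mb{x}$ is fixed. Indeed, fixing $\mb{x}\in\xx$ turns the quadratic term into $\sum_{k,\ell}(\sum_{i,j}q_{ijk\ell}x_{ij})\,y_{k\ell}$, so that choosing the optimal $\mb{y}\in\yy$ amounts to a linear assignment problem on the $n\times n$ cost matrix with entries $\sum_{i,j}q_{ijk\ell}x_{ij}+d_{k\ell}$, which the Hungarian method solves in $O(n^3)$ time. I would therefore enumerate all $m!$ matrices $\mb{x}\in\xx$, solve the associated LAP for each, and return the best pair $(\mb{x},\mb{y})$; this is exact. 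The only remaining point is to verify that $m!$ is polynomially bounded in $n$ when $m=O(\sqrt{\log n})$. Using $m!\le m^m$ and taking logarithms gives $\log(m!)\le m\log m=O(\sqrt{\log n}\,\log\log n)=o(\log n)$, so $m!=n^{o(1)}$ and the total running time $O(m!\,n^3)$ is polynomial.

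For the hardness part, the plan is a padding reduction from a BAP family already known to be NP-hard with equal sizes $m=n=p$; Theorem~\ref{tc2} (diagonal $Q$) supplies such a family, as does the 3AP reduction from the introduction. Given such an instance $(Q,C,D)$ of size $p$, I would build a BAP instance with $m=p$ and $n=p^r$, so that $m=\sqrt[r]{n}$. The idea is to inflate only the $\mb{y}$-dimension: keep the first $p$ coordinates as the ``real'' block carrying the original costs, append $p^r-p$ dummy coordinates with zero cost, and put a large finite penalty $L$ on the entries of the new linear array that would send a real coordinate to a dummy one or vice versa. With $L$ larger than every attainable block-diagonal objective value (and $L$ is clearly polynomially bounded in the input), every optimal $\mb{y}$ is forced to be block diagonal; its real block then ranges over all $p\times p$ permutation matrices while the dummy block is irrelevant. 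Defining the new $Q$ to agree with the old one when $k,\ell\le p$ and to vanish otherwise makes the objective coincide with that of the original instance on block-diagonal solutions. Thus the padded instance is a faithful encoding of the size-$p$ hard instance, and since $n=p^r$ is polynomial in $p$ for fixed $r$, the reduction runs in polynomial time, yielding NP-hardness whenever $m=O(\sqrt[r]{n})$.

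I expect the main obstacle to be the bookkeeping in the padding step: one must argue carefully that the penalty $L$ can be chosen so that no optimal solution crosses blocks, and that after discarding the dummy block the remaining structure is \emph{exactly} the original instance with its objective preserved, i.e.\ that block-diagonal matrices $\mb{y}\in\yy$ correspond bijectively (modulo the irrelevant dummy block) to feasible solutions of the size-$p$ problem. By comparison the factorial estimate in the polynomial part is routine, the only care being to confirm $\sqrt{\log n}\,\log\log n=o(\log n)$ so that the enumeration of $\xx$ stays within polynomial time.
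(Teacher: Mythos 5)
Your proposal is correct and follows essentially the same route as the paper in both parts: the polynomial case is the identical fix-$\mb{x}$, enumerate-$\xx$, solve-an-LAP-for-$\mb{y}$ argument (the paper bounds $|\xx|=m!<2^{m^2}$ instead of using $m!\le m^m$, but either estimate gives $|\xx|=O(n^c)$), and the hardness case is the same padding-with-large-penalty reduction forcing block-diagonal solutions. The only cosmetic difference is that the paper pads an \emph{arbitrary} BAP instance in both dimensions (producing $\hat{m}=n$, $\hat{n}=n^{r}$), whereas you pad only the $\mb{y}$-dimension of a known hard equal-size family such as the diagonal-$Q$ instances of Theorem~\ref{tc2}; both constructions yield instances with $m=\sqrt[r]{n}$ and are equally valid.
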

\begin{proof}
	For a given $\mb{x}^*\in\xx$, one can find a $\mb{y}\in\yy$  which minimizes $f(\mb{x}^*,\mb{y})$ by investing $O(n^3)$ time. This is achieved by solving a \emph{linear assignment problem} (LAP) \cite{Burkard-book}  with cost matrix $H=(h_{k\ell})$ where $h_{k\ell}=d_{k\ell}+\sum_{i=1}^m\sum_{j=1}^m q_{ijk\ell}x^*_{ij}$. Hence, if $|\xx|=m!$ is a polynomial in $n$, then BAP can be solved in  polynomial time by enumerating $\xx$ and solving all corresponding LAPs. Since $\xx$ is a subset of the set of all $m\times m$ 0-1 matrices, it follows that $|\xx|< 2^{m^2}.$ Hence, if $m= O(\sqrt{\log n})$, then we get that $|\xx|= O(n^c)$, for some constant $c$. This proves the first part of the theorem.

Now we prove the second part. Let $(Q,C,D)$ be an arbitrary instance of BAP with size parameters $m$ and $n$. Given a constant $r$, consider the $n\times n \times n^r\times n^r$ cost array $\hat{Q}$, the $n\times n$ matrix $\hat{C}$ and the $n^r\times n^r$ matrix $\hat{D}$ defined as follows:
\[
\hat{q}_{ijk\ell}=\begin{cases}
	q_{ijk\ell} & \text{if }i,j\leq m \text{ and } k,\ell\leq n,\\
	0 & \text{otherwise,}
\end{cases}
\qquad
\hat{c}_{ij}=\begin{cases}
	c_{ij} & \text{if }i,j\leq m,\\
	0 & \text{if } i,j>m,\\
	L & \text{otherwise,}
\end{cases}
\qquad
\hat{d}_{k\ell}=\begin{cases}
	d_{k\ell} & \text{if }k,\ell\leq n,\\
	0 & \text{if } k,\ell>n,\\
	L & \text{otherwise,}
\end{cases}
\]
where $L$ is a large number. It is easy to see that the BAP instances $(Q,C,D)$ and $(\hat{Q},\hat{C},\hat{D})$ are equivalent in the sense that from an optimal solution to one problem, an optimal solution to other can be recovered in polynomial time. $(\hat{Q},\hat{C},\hat{D})$ satisfies the condition of the second statement of the theorem and it can be constructed in polynomial time, therefore the result follows.
\end{proof}


Polynomially solvable special cases of the QAP and the 3AP have been investigated by many researchers~\cite{Burkard-book,S00}. Since QAP and 3AP are special cases of the BAP, all of their polynomially solvable special cases can be mapped onto polynomially solvable special cases for the BAP.  Let us now consider some new polynomially solvable cases.

Note that the rows of $Q$ can be labeled using the ordered pairs $(i,j)$, $i,j=1,2,\ldots ,m$. Then the row of $Q$ represented by the label $(i,j)$ can be viewed as an $n\times n$ matrix $P^{ij}=(p^{ij}_{k\ell})$ where $p^{ij}_{k\ell}=q_{ijk\ell}$ for $k,\ell = 1,2, \ldots ,n$. A linear assignment problem (LAP) with cost matrix $P^{ij}$ has {\it constant value property} (CVP) if there exists a constant $\alpha_{ij}$ such that $\sum_{k=1}^n\sum_{\ell=1}^np^{ij}_{k\ell}y_{k\ell}=\alpha_{ij}$ for all $\mb{y}\in \yy$. Characterizations of cost matrices with CVP for various combinatorial optimization problems have been studied by different authors~\cite{BE79,CK16}. 

In the case when LAP with cost matrix $P^{ij}$ has CVP for all $i,j=1,2,\ldots, m$, we define $W=w_{ij}$ to be an $m\times m$ matrix where $w_{ij}=\alpha_{ij}+c_{ij}$.

\begin{theorem}\label{tp1}
If $P^{ij}$ satisfies CVP for all $i,j=1,2,\ldots ,m$, $\mb{x}^*$ is an optimal solution to the LAP with cost matrix $W$, and $\mb{y}^*$ is an optimal solution to the LAP with cost matrix $D$, then $(\mb{x}^*,\mb{y}^*)$ is an optimal solution to the BAP $(Q,C,D)$.
\end{theorem}
\begin{proof}
\begin{align*}
\min_{\mb{x}\in \xx,\mb{y}\in \yy}f(\mb{x},\mb{y})&=\min_{\mb{x}\in \xx,\mb{y}\in \yy}\left\{\sum_{i=1}^m\sum_{j=1}^m\sum_{k=1}^n\sum_{\ell=1}^n q_{ijk\ell}x_{ij}y_{k\ell} + \sum_{i=1}^m\sum_{j=1}^m c_{ij}x_{ij} + \sum_{k=1}^n\sum_{\ell=1}^n d_{k\ell}y_{k\ell}\right\}\\
&=\min_{\mb{x}\in \xx,\mb{y}\in \yy}\left\{\sum_{i=1}^m\sum_{j=1}^m\alpha_{ij}x_{ij} + \sum_{i=1}^m\sum_{j=1}^m c_{ij}x_{ij} + \sum_{k=1}^n\sum_{\ell=1}^n d_{k\ell}y_{k\ell}\right \}\\
&=\min_{\mb{x}\in \xx}\left\{\sum_{i=1}^m\sum_{j=1}^m(\alpha_{ij} + c_{ij})x_{ij}\right\} + \min_{\mb{y}\in \yy}\left\{\sum_{k=1}^n\sum_{\ell=1}^n d_{k\ell}y_{k\ell}\right \}
\end{align*}
Thus,  BAP decomposes into two LAPs and the result follows.
\end{proof}

An analogous result can be derived using CVP for columns of $Q$. We omit the details.
\medskip

A linear assignment problem (LAP) with cost matrix $P^{ij}$ has \emph{two value property} (2VP) if there exist some constants $\alpha_{ij}$ and $ \beta_{ij}$ such that $\sum_{k=1}^n\sum_{\ell=1}^np^{ij}_{k\ell}y_{k\ell}$ is equal to either $\alpha_{ij}$ or $\beta_{ij}$, for all $\mb{y}\in \yy$. Tarasov~\cite{tar81} gave a characterization of cost matrices for LAP having 2VP. In view of Theorem~\ref{tp1}, it would be interesting to explore the complexity of BAP when $P^{ij}$ satisfies 2VP, for all $i,j$. As shown below, the polynomial solvability of BAP does not extend if CVP of matrices $P^{ij}$ is replaced by 2VP.

\begin{theorem}
BAP remains strongly NP-hard even if $P^{ij}$ satisfies 2VP for all $i,j=1,2,\ldots, m$.
\end{theorem}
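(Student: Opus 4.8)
The plan is to reuse the reduction from DISJOINT MATCHINGS constructed in the proof of Theorem~\ref{tc2}, in its $0$-$1$ cost variant (the one obtained by replacing $\frac{1}{\alpha+1}$ with $0$ in \eqref{ered}). In that reduction we have $m=n$ and the array $Q$ is the identity matrix when viewed as an $n^2\times n^2$ matrix, i.e.\ $q_{ijk\ell}=1$ precisely when $i=k$ and $j=\ell$, and $0$ otherwise. The only genuinely new thing to verify is that this particular $Q$ already places every row matrix $P^{ij}$ inside the 2VP class; once that is checked, strong NP-hardness is inherited directly from DISJOINT MATCHINGS and no fresh reduction is needed.

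First I would examine the row matrices $P^{ij}=(p^{ij}_{k\ell})$ for this $Q$. By definition $p^{ij}_{k\ell}=q_{ijk\ell}$, so when $Q$ is the identity the matrix $P^{ij}$ has a single nonzero entry, namely a $1$ in position $(i,j)$, and zeros elsewhere. Consequently, for every $\mb{y}\in\yy$,
\[
\sum_{k=1}^n\sum_{\ell=1}^n p^{ij}_{k\ell}\,y_{k\ell}=y_{ij}\in\{0,1\}.
\]
Hence each $P^{ij}$ satisfies 2VP with $\alpha_{ij}=0$ and $\beta_{ij}=1$. It is worth stressing that this value is genuinely non-constant, so Theorem~\ref{tp1} does not apply here; that contrast is precisely the point of the present theorem, which shows that relaxing CVP to 2VP destroys the polynomial solvability guaranteed in the CVP case.

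It then remains to record that the hardness is strong rather than merely ordinary. Since DISJOINT MATCHINGS is NP-complete as a purely combinatorial graph problem with no numeric input, and the reduction produces a BAP instance whose data $(Q,C,D)$ consists entirely of $0$-$1$ entries, every number in the instance is bounded by a constant. Strong NP-hardness therefore follows from exactly the same ``yes''/``no'' correspondence already established in Theorem~\ref{tc2}: a disjoint pair of perfect matchings $M_1\subseteq E_1$, $M_2\subseteq E_2$ exists if and only if the optimum of the constructed BAP instance equals $0$.

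I do not anticipate a real obstacle in carrying this out; the entire argument hinges on the single observation that an identity $Q$ yields each $P^{ij}$ with exactly one nonzero entry, which automatically lands it in the 2VP class. The two points requiring care are the bookkeeping that distinguishes 2VP from CVP (the two attainable values $0$ and $1$ are distinct, so this is not secretly a CVP instance) and the confirmation that the reduction introduces no large numbers, which is what licenses the adverb ``strongly''.
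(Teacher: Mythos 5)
Your proposal is correct and follows essentially the same route as the paper: the paper's proof simply observes that a diagonal $Q$ with no zero diagonal entries (such as the identity matrix used in the reduction of Theorem~\ref{tc2}) makes every $P^{ij}$ satisfy 2VP, and then invokes Theorem~\ref{tc2}. Your explicit verification that $\sum_{k,\ell}p^{ij}_{k\ell}y_{k\ell}=y_{ij}\in\{0,1\}$ and your remark that the $0$-$1$ cost variant justifies the word ``strongly'' just spell out details the paper leaves implicit.
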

\begin{proof}
If $Q$ is a diagonal matrix with no zero entries on the diagonal, then it is easy to verify that $P^{ij}$ satisfies 2VP for all $i,j=1,2,\ldots ,m$. The result now follows from Theorem~\ref{tc2}.
\end{proof}

\subsection{Characterization of linearizable instances}

In Theorem~\ref{tp1} we observed that there are special cases of BAP that can be solved by solving two independent LAPs. We generalize this by characterizing instances of BAP that are `equivalent' to two independent LAPs. Let us first introduce some definitions.

An $m\times m\times n\times n$ cost array $Q=(q_{ijk\ell})$ associated with a BAP is said to be \emph{linearizable} if there exist an $m\times m$ matrix $A=(a_{ij})$ and an $n\times n$ matrix $B=(b_{ij})$, such that
\[
	\sum_{i=1}^m\sum_{j=1}^m\sum_{k=1}^n\sum_{\ell=1}^n q_{ijk\ell}x_{ij}y_{k\ell} = \sum_{i=1}^m\sum_{j=1}^m a_{ij}x_{ij} + \sum_{k=1}^n\sum_{\ell =1}^n b_{k\ell}y_{k\ell}
\]
for every $\mb{x}\in \xx$ and $\mb{y}\in \yy$. The matrices $A$ and $B$ are called the linearization of $Q$. 

Note that the collection of all linearizable $m\times m\times n\times n$ cost arrays forms a subspace of $\R^{m^2\times n^2}$. If $Q$ is linearizable, and a linearization (i.e.\@ matrices $A$ and $B$) is given, then the BAP instance $(Q,C,D)$ is solvable in $O(m^3+n^3)$ time for every $C$ and $D$. I.e., such BAP instances reduce to two LAPs with respective cost matrices $A+C$ and $B+D$. Therefore we say that an instance $(Q,C,D)$ of BAP is linearizable if and only if $Q$ is linearizable. Linearizable instances of QAP have been studied by various authors~\cite{KP11, PK13, CDW}. Corresponding properties for the \emph{quadratic spanning tree problem} (QMST) was investigated in \cite{CP15}. Before attempting to characterize linearizable instances of BAP, let us first establish some preliminary results.

\begin{lemma}\label{lmLinSuff}
If the cost array $Q=(q_{ijk\ell})$ of a BAP satisfies
	\begin{equation}\label{eqSumDec}
		 q_{ijk\ell}=e_{ijk}+f_{ij\ell}+g_{ik\ell}+h_{jk\ell}\qquad i,j\in N,\ k,\ell\in M,
	\end{equation}
	for some $m\times m\times n$ arrays $E=(e_{ijk})$, $F=(f_{ijk})$ and $m\times n\times n$ arrays $G=(g_{ijk})$, $H=(h_{ijk})$, then $Q$ is linearizable.
\end{lemma}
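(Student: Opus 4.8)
The plan is to substitute the decomposition \eqref{eqSumDec} into the quadratic form $\bar f(\mb{x},\mb{y})=\sum_{i,j,k,\ell}q_{ijk\ell}x_{ij}y_{k\ell}$ and split it into four sums, one for each of the arrays $E,F,G,H$. The essential observation is that each of the four terms lacks exactly one of the four indices $i,j,k,\ell$ in its coefficient, so that summing over the missing index collapses one factor of $\mb{x}$ or $\mb{y}$ against exactly one of the four assignment constraints \eqref{x1}--\eqref{y2}, whose row and column sums all equal $1$. Each term thereby becomes linear in either $\mb{x}$ alone or $\mb{y}$ alone.

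To illustrate the mechanism, consider the $E$-term. Since $e_{ijk}$ does not depend on $\ell$, I would sum over $\ell$ first and invoke \eqref{y1}, giving
\[
\sum_{i,j,k,\ell}e_{ijk}x_{ij}y_{k\ell}=\sum_{i,j,k}e_{ijk}x_{ij}\Big(\sum_{\ell}y_{k\ell}\Big)=\sum_{i,j}\Big(\sum_{k}e_{ijk}\Big)x_{ij},
\]
which is linear in $\mb{x}$. The $F$-term is handled identically, summing over its absent index $k$ and using \eqref{y2} to again produce a term linear in $\mb{x}$. Symmetrically, the $G$-term (missing $j$) and the $H$-term (missing $i$) are summed over their absent index using \eqref{x1} and \eqref{x2} respectively, each collapsing to a term linear in $\mb{y}$.

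Collecting the four contributions, I would define the linearization by
\[
a_{ij}=\sum_{k=1}^{n}e_{ijk}+\sum_{\ell=1}^{n}f_{ij\ell},\qquad b_{k\ell}=\sum_{i=1}^{m}g_{ik\ell}+\sum_{j=1}^{m}h_{jk\ell},
\]
so that $\bar f(\mb{x},\mb{y})=\sum_{i,j}a_{ij}x_{ij}+\sum_{k,\ell}b_{k\ell}y_{k\ell}$ for all $\mb{x}\in\xx$ and $\mb{y}\in\yy$, which is exactly the definition of linearizability. There is no deep obstacle here; the only point requiring care is the bookkeeping of matching each of the four terms to the correct constraint, i.e.\@ verifying that the index eliminated from each term is precisely one whose corresponding row or column sum is pinned to $1$ (the $\ell$-sum for $E$, the $k$-sum for $F$, the $j$-sum for $G$, and the $i$-sum for $H$). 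Once this correspondence is checked, the identity holds term by term and the result follows.
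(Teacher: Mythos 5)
Your proposal is correct and takes essentially the same route as the paper: both substitute the decomposition \eqref{eqSumDec} into $\bar f(\mb{x},\mb{y})$, collapse the sums over the absent indices using the assignment constraints, and arrive at exactly the same linearization $a_{ij}=\sum_{k}e_{ijk}+\sum_{\ell}f_{ij\ell}$, $b_{k\ell}=\sum_{i}g_{ik\ell}+\sum_{j}h_{jk\ell}$. The only cosmetic difference is that the paper groups the terms in pairs and phrases the collapse as the constant value property of LAPs on sum matrices, whereas you sum out each of the four missing indices directly against constraints \eqref{x1}--\eqref{y2}.
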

\begin{proof}
	Let $Q$ be of the form \eqref{eqSumDec}, for some  $E=(e_{ijk})$, $F=(f_{ijk})$, $G=(g_{ijk})$ and $H=(h_{ijk})$. Then for every $\mb{x}\in \xx$ and $\mb{y}\in \yy$ we have that
\begin{align}
	\bar{f}(\mb{x},\mb{y}) &= \sum_{i=1}^m\sum_{j=1}^m\sum_{k=1}^n\sum_{\ell=1}^n q_{ijk\ell}x_{ij}y_{k\ell}
	= \sum_{i=1}^m\sum_{j=1}^m \sum_{k=1}^n\sum_{\ell=1}^n\left(e_{ijk}+ f_{ij\ell}+g_{ik\ell}+ h_{jk\ell}\right)x_{ij}y_{k\ell}\nonumber\\
	&= \sum_{i=1}^m\sum_{j=1}^m \sum_{k=1}^n\sum_{\ell=1}^n\left(e_{ijk}+ f_{ij\ell}\right)x_{ij}y_{k\ell}  + \sum_{i=1}^m\sum_{j=1}^m\sum_{k=1}^n\sum_{\ell =1}^n \left(g_{ik\ell}+ h_{jk\ell}\right)x_{ij}y_{k\ell}\nonumber\\
	&= \sum_{i=1}^m\sum_{j=1}^m \left(\sum_{k=1}^n\sum_{\ell=1}^n\left(e_{ijk}+ f_{ij\ell}\right)y_{k\ell}\right) x_{ij} + \sum_{k=1}^n\sum_{\ell =1}^n \left(\sum_{i=1}^m\sum_{j=1}^m\left(g_{ik\ell}+ h_{jk\ell}\right)x_{ij}\right)y_{k\ell}. \label{eqLmLn1}	
\end{align}
Note that LAP given on a sum matrix $T$ (i.e.\@ of the form $t_{ij}=r_i+s_j$) has CVP. Moreover, the constant objective function value for such matrix is precisely $\sum_i r_i+\sum_j s_j$. Hence, for every $\mb{y}\in \yy$ and every $\mb{x}\in \xx$
\[ \sum_{k=1}^n\sum_{\ell=1}^n\left(e_{ijk}+ f_{ij\ell}\right)y_{k\ell}=\sum_{k=1}^n e_{ijk}+\sum_{\ell=1}^n  f_{ij\ell}=a_{ij} \mbox{ (say)}, \]
and
\[ \sum_{i=1}^m\sum_{j=1}^m\left(g_{ik\ell}+ h_{jk\ell}\right)x_{ij}=\sum_{i=1}^m g_{ik\ell}+\sum_{j=1}^m  h_{jk\ell}=b_{kl}\mbox{ (say)}. \]
Then, from \eqref{eqLmLn1} we have,
\[
	\sum_{i=1}^m\sum_{j=1}^m\sum_{k=1}^n\sum_{\ell=1}^n q_{ijk\ell}x_{ij}y_{k\ell} = \sum_{i=1}^m\sum_{j=1}^m a_{ij}x_{ij} + \sum_{k=1}^n\sum_{\ell =1}^n b_{k\ell}y_{k\ell},
\]
which proves the lemma.
\end{proof}

Later we will show that the sufficient condition \eqref{eqSumDec} is also  necessary for $Q$ to be linearizable. Arrays satisfying \eqref{eqSumDec} are called \emph{sum decomposable arrays with parameters $4$ and $3$} in \cite{CK16}, where general sum decomposable arrays were investigated.

To establish that condition \eqref{eqSumDec} is also necessary for linearizability of the associated BAP, we use the following lemma.

\begin{lemma}\label{lmConNec}
	If the cost array  $Q=(q_{ijk\ell})$ associated with a BAP satisfies
	\begin{equation}\label{eqConst}
		 \bar{f}(\mb{x},\mb{y})=\sum_{i=1}^m\sum_{j=1}^m\sum_{k=1}^n\sum_{\ell=1}^n q_{ijk\ell}x_{ij}y_{k\ell}=K
	\end{equation}
	for all $\mb{x}\in \xx$, $\mb{y}\in \yy$ and some constant $K$, then $Q$ must be of the form \eqref{eqSumDec}.
\end{lemma}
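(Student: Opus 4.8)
```latex
The plan is to show that the constancy condition \eqref{eqConst} forces a sum-decomposable structure \eqref{eqSumDec} on $Q$. The key observation is that the space $\yy$ (and $\xx$) is rich enough that varying over permutations pins down $Q$ up to the $e,f,g,h$ degrees of freedom. My main strategy is to use \emph{differences} of the quantity $\bar f$ over pairs of solutions that differ by a single transposition, since such differences must vanish and thereby yield linear relations among the $q_{ijk\ell}$.

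First I would fix any two permutation matrices $\mb{x}',\mb{x}''\in\xx$ and $\mb{y}',\mb{y}''\in\yy$. By \eqref{eqConst}, $\bar f(\mb{x}',\mb{y}')=\bar f(\mb{x}'',\mb{y}'')=K$, so all such values coincide. I would then specialize to solutions differing by a single transposition. Concretely, suppose $\mb{y}''$ is obtained from $\mb{y}'$ by swapping the images of two indices, so that the only changed entries correspond to a $2\times 2$ block. Holding $\mb{x}$ fixed and subtracting the two instances of \eqref{eqConst} yields, after the fixed-$\mb{x}$ sums are carried out, a relation of the form
\[
\sum_{i=1}^m\sum_{j=1}^m x_{ij}\bigl(q_{ijk\ell}+q_{ijk'\ell'}-q_{ijk\ell'}-q_{ijk'\ell}\bigr)=0
\]
for every $\mb{x}\in\xx$ and every choice of the transposed indices $k,k',\ell,\ell'$. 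Applying the same transposition argument now on the $\mb{x}$ side (swap two rows of $\mb{x}$ and subtract) lets me strip off the remaining freedom and conclude that the full ``second difference'' in both coordinate pairs vanishes:
\[
q_{ijk\ell}-q_{ij'k\ell}-q_{i'jk\ell}+q_{i'j'k\ell}
-q_{ijk'\ell'}+q_{ij'k'\ell'}+q_{i'jk'\ell'}-q_{i'j'k'\ell'}=0,
\]
together with its analogue obtained by swapping the roles of the $(i,j)$ and $(k,\ell)$ index pairs. These mixed second differences being zero is precisely the algebraic signature of an array of the sum-decomposable form \eqref{eqSumDec}.

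The final step is the reconstruction: from the vanishing second-difference relations I would exhibit explicit arrays $E,F,G,H$ witnessing \eqref{eqSumDec}. The natural choice is to anchor at a reference index, say $i=j=1$ on the $\mb{x}$ side and $k=\ell=1$ on the $\mb{y}$ side, and define, for example, $e_{ijk}$ and $f_{ij\ell}$ from the slices $q_{ijk1}$ and $q_{ij1\ell}$, and $g_{ik\ell},h_{jk\ell}$ from the slices $q_{i1k\ell}$ and $q_{1jk\ell}$, subtracting off the reference corner $q_{11k\ell}$ or $q_{ij11}$ as needed to avoid double counting. The vanishing second differences guarantee that substituting this candidate back into \eqref{eqSumDec} recovers $q_{ijk\ell}$ exactly for all indices, not merely on the slices.

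I expect the main obstacle to be the careful separation of the $\mb{x}$- and $\mb{y}$-variations. Because \eqref{eqConst} is a \emph{bilinear} constancy statement rather than two independent linear ones, a naive single subtraction leaves a residual sum over the fixed coordinate that still depends on the fixed permutation; I must argue that ranging the fixed permutation over all of $\xx$ (or $\yy$) forces each coordinatewise second difference to vanish individually, and this requires knowing that the transposition-difference vectors span the relevant subspace orthogonal to the all-ones direction. A clean way to handle this bookkeeping is to phrase the argument as: the function $\bar f$ is constant on $\xx\times\yy$ iff it is constant in $\mb{y}$ for each fixed $\mb{x}$ and the resulting constant is independent of $\mb{x}$, and then invoke the known characterization (cited in the excerpt via \cite{CK16} for general sum-decomposable arrays) for the single-factor CVP to iterate. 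Verifying the reconstructed decomposition against the second-difference identities is then routine but must be done with attention to the anchoring offsets so that no term is counted twice.
```
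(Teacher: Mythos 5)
Your strategy---double differencing of $\bar{f}$ over transpositions in $\mb{x}$ and in $\mb{y}$, followed by an explicit reconstruction of $E,F,G,H$ anchored at a reference index---is the same route the paper takes, but the key identity you extract is wrong, and not merely as a typo: the statement you display is false under the hypothesis of the lemma. Your first displayed relation is correct: writing $r_{ij}=q_{ijk\ell}+q_{ijk'\ell'}-q_{ijk\ell'}-q_{ijk'\ell}$, constancy gives $\sum_{i,j}x_{ij}r_{ij}=0$ for all $\mb{x}\in\xx$. But differencing this over a transposition of $\mb{x}$ yields $r_{ij}-r_{ij'}-r_{i'j}+r_{i'j'}=0$, a \emph{sixteen}-term identity: setting $D(k,\ell)=q_{ijk\ell}-q_{ij'k\ell}-q_{i'jk\ell}+q_{i'j'k\ell}$, it says $D(k,\ell)-D(k,\ell')-D(k',\ell)+D(k',\ell')=0$, i.e.\ $D$ is a sum function of $k$ and $\ell$. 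Your eight-term identity instead asserts $D(k,\ell)=D(k',\ell')$, i.e.\ that $D$ is \emph{constant}, which is strictly stronger and does not follow. Concretely, take $m=n=2$ and $q_{1111}=q_{1112}=1$, $q_{1121}=q_{1122}=-1$, all other entries zero. Then $\bar{f}(\mb{x},\mb{y})=x_{11}(y_{11}+y_{12}-y_{21}-y_{22})=0$ for every $\mb{x}\in\xx$, $\mb{y}\in\yy$, so hypothesis \eqref{eqConst} holds (and $Q$ is indeed of the form \eqref{eqSumDec}, with $e_{111}=1$, $e_{112}=-1$, all other witness entries zero); yet with $i=j=k=\ell=1$ and $i'=j'=k'=\ell'=2$ your expression evaluates to $2$, not $0$. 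The same example refutes your claim that your identities are ``precisely the algebraic signature'' of \eqref{eqSumDec}: the correct signature is the sixteen-term identity (which this $Q$ does satisfy), not constancy of $D$.

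The gap is repairable, and the repair is exactly what the paper does: take the sixteen-term identity with anchor $i'=j'=k'=\ell'=1$, solve it for $q_{ijk\ell}$ to express $q_{ijk\ell}$ as a signed sum of fifteen entries each having at least one index equal to $1$, and check that this formula remains valid when any of $i,j,k,\ell$ itself equals $1$. Even then, your reconstruction sketch underestimates the bookkeeping: a slice term such as $q_{ij11}$ can be absorbed into either $e_{ijk}$ or $f_{ij\ell}$, a term such as $q_{i111}$ into any of three of the arrays, and $q_{1111}$ into all four, which is why the paper's witness arrays carry coefficients $-\frac{1}{2}$, $+\frac{1}{3}$ and $-\frac{1}{4}$ on those terms. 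Your alternative suggestion at the end (fix $\mb{x}$, apply the CVP characterization to the matrix $\sum_{i,j}q_{ijk\ell}x_{ij}$, then vary $\mb{x}$) could likely be made to work, but it is only named, not carried out. As written, the proposal rests on a false intermediate identity and an unverified reconstruction, so it does not constitute a proof.
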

\begin{proof}
	Let $i,j\in M$ and $k,\ell\in N$ be chosen arbitrary such that $i,j,k,\ell\geq 2$. Also, let $\mb{x}^1,\mb{x}^2\in\xx$ be such that they only differ on index pairs from $\{1,i\}\times\{1,j\}$. In particular, let $x^1_{11}=x^1_{ij}=x^2_{1j}=x^2_{i1}=1$.
Similarly, let $\mb{y}^1,\mb{y}^2\in\yy$ be such that they only differ on index pairs from $\{1,k\}\times\{1,\ell\}$ and in particular, let $y^1_{11}=y^1_{k\ell}=y^2_{1\ell}=y^2_{k1}=1$.

Now let us assume that \eqref{eqConst} is true for every $\mb{x}\in\xx$ and $\mb{y}\in\yy$. Then it follows that \begin{equation}\label{eqConst2}
	\bar{f}(\mb{x}^1,\mb{y}^1)+\bar{f}(\mb{x}^2,\mb{y}^2)=\bar{f}(\mb{x}^1,\mb{y}^2)+\bar{f}(\mb{x}^2,\mb{y}^1).
\end{equation}
If we expand objective value expressions in \eqref{eqConst2} and cancel out identical parts from the left and right sides, we get
\[
	q_{1111}+q_{11k\ell}+q_{ij11}+q_{ijk\ell}+q_{1j1\ell}+q_{1jk1}+q_{i11\ell}+q_{i1k1}=
	q_{111\ell}+q_{11k1}+q_{ij1\ell}+q_{ijk1}+q_{1j11}+q_{1jk\ell}+q_{i111}+q_{i1k\ell},
\]
from which it follows that
\begin{equation}\label{eqConstAlign}\begin{aligned}
	q_{ijk\ell}=&\hspace{13pt} q_{ijk1}+q_{ij1\ell}+q_{i1k\ell}+q_{1jk\ell}\\
			&-q_{ij11}-q_{i1k1}-q_{i11\ell}-q_{1jk1}-q_{1j1\ell}-q_{11k\ell}\\
			&+q_{i111}+q_{1j11}+q_{11k1}+q_{111\ell}\\
			&-q_{1111}
\end{aligned}\end{equation}
for every $i,j,k,\ell\geq 2$. However, note that \eqref{eqConstAlign} also holds true when some of $i,j,k,\ell$ are equal to 1. Namely, if we replace one of $i,j,k,\ell$ with 1, everything cancels out. Hence \eqref{eqConstAlign} holds for all $i,j\in M$, $k,\ell\in N$.

Define the $m\times m\times n$ arrays $A=(a_{ijk})$, $B=(b_{ijk})$ and the $m\times n\times n$ arrays $C=(c_{ijk})$, $D=(d_{ijk})$ as
\begin{align*}
	a_{ijk} &= q_{ijk1}-\frac{1}{2}q_{ij11}-\frac{1}{2}q_{i1k1}-\frac{1}{2}q_{1jk1}+\frac{1}{3}q_{i111}+\frac{1}{3}q_{1j11}+\frac{1}{3}q_{11k1}-\frac{1}{4}q_{1111},\\
	b_{ij\ell} &= q_{ij1\ell}-\frac{1}{2}q_{ij11}-\frac{1}{2}q_{i11\ell}-\frac{1}{2}q_{1j1\ell}+\frac{1}{3}q_{i111}+\frac{1}{3}q_{1j11}+\frac{1}{3}q_{111\ell}-\frac{1}{4}q_{1111},\\
	c_{ik\ell} &= q_{i1k\ell}-\frac{1}{2}q_{i1k1}-\frac{1}{2}q_{i11\ell}-\frac{1}{2}q_{11k\ell}+\frac{1}{3}q_{i111}+\frac{1}{3}q_{11k1}+\frac{1}{3}q_{111\ell}-\frac{1}{4}q_{1111},\\
	d_{jk\ell} &= q_{1jk\ell}-\frac{1}{2}q_{1jk1}-\frac{1}{2}q_{1j1\ell}-\frac{1}{2}q_{11k\ell}+\frac{1}{3}q_{1j11}+\frac{1}{3}q_{11k1}+\frac{1}{3}q_{111\ell}-\frac{1}{4}q_{1111}.
\end{align*}
Then, from \eqref{eqConstAlign} if follows that
\[
	q_{ijk\ell}=a_{ijk}+b_{ij\ell}+c_{ik\ell}+d_{jk\ell},
\]
which proves the lemma.
\end{proof}

We are now ready to prove the characterization of linearization instances of BAP.

\begin{theorem}
	The cost array $Q=(q_{ijk\ell})$ of a BAP is linearizable if and only if it is of the form
	\begin{equation}\label{eqLinThm}
		 q_{ijk\ell}=e_{ijk}+f_{ij\ell}+g_{ik\ell}+h_{jk\ell}\qquad i,j\in N,\ k,\ell\in M,
	\end{equation}
	for some $m\times m\times n$ arrays $E=(e_{ijk})$, $F=(f_{ijk})$ and $m\times n\times n$ arrays $G=(g_{ijk})$, $H=(h_{ijk})$.
\end{theorem}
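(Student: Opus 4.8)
The plan is to prove the two implications separately. The sufficiency direction---that every array of the form \eqref{eqLinThm} is linearizable---is already supplied by Lemma~\ref{lmLinSuff}, so no further argument is needed there. All the work lies in the converse: assuming $Q$ is linearizable, I must exhibit a decomposition of the form \eqref{eqLinThm}. The strategy is to strip off the linear part of a given linearization so that what remains is a quadratic form that is \emph{constant} over all feasible solutions, and then invoke Lemma~\ref{lmConNec}.

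Concretely, suppose $Q$ is linearizable, so there are matrices $A=(a_{ij})$ and $B=(b_{k\ell})$ with $\bar{f}(\mb{x},\mb{y})=\sum_{i,j}a_{ij}x_{ij}+\sum_{k,\ell}b_{k\ell}y_{k\ell}$ for all $\mb{x}\in\xx$, $\mb{y}\in\yy$. I would introduce a new array $Q'=(q'_{ijk\ell})$ defined by $q'_{ijk\ell}=q_{ijk\ell}-\tfrac{1}{n}a_{ij}-\tfrac{1}{m}b_{k\ell}$. The key calculation is that its quadratic form vanishes identically: since every $\mb{x}\in\xx$ satisfies $\sum_{i,j}x_{ij}=m$ and every $\mb{y}\in\yy$ satisfies $\sum_{k,\ell}y_{k\ell}=n$, the two correction terms exactly reproduce the linear part, so that $\sum_{i,j,k,\ell}q'_{ijk\ell}x_{ij}y_{k\ell}=\bar{f}(\mb{x},\mb{y})-\sum_{i,j}a_{ij}x_{ij}-\sum_{k,\ell}b_{k\ell}y_{k\ell}=0$ for every feasible $(\mb{x},\mb{y})$. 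Thus $Q'$ meets the hypothesis \eqref{eqConst} of Lemma~\ref{lmConNec} with $K=0$.

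Applying Lemma~\ref{lmConNec} to $Q'$ then produces arrays with $q'_{ijk\ell}=e'_{ijk}+f'_{ij\ell}+g'_{ik\ell}+h'_{jk\ell}$. It remains only to reabsorb the correction terms: $\tfrac{1}{n}a_{ij}$ depends only on the pair $(i,j)$ and hence is a (degenerate) $m\times m\times n$ array that can be folded into the $e$-term, while $\tfrac{1}{m}b_{k\ell}$ depends only on $(k,\ell)$ and can be folded into the $g$-term (equivalently the $h$-term). Setting $e_{ijk}=e'_{ijk}+\tfrac{1}{n}a_{ij}$, $g_{ik\ell}=g'_{ik\ell}+\tfrac{1}{m}b_{k\ell}$, $f_{ij\ell}=f'_{ij\ell}$ and $h_{jk\ell}=h'_{jk\ell}$ recovers $q_{ijk\ell}=e_{ijk}+f_{ij\ell}+g_{ik\ell}+h_{jk\ell}$, which is precisely \eqref{eqLinThm}.

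The only genuinely delicate point I anticipate is the choice of normalizing constants $\tfrac{1}{n}$ and $\tfrac{1}{m}$ in the definition of $Q'$; these are forced by the cardinalities $\sum_{i,j}x_{ij}=m$ and $\sum_{k,\ell}y_{k\ell}=n$ of assignment matrices, and getting them right is exactly what makes the quadratic form of $Q'$ collapse to a constant so that Lemma~\ref{lmConNec} becomes applicable. Everything after that---the invocation of the lemma and the reabsorption of the degenerate correction arrays---is routine bookkeeping.
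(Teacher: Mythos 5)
Your proposal is correct and follows essentially the same route as the paper: both subtract from $Q$ a sum-decomposable array whose quadratic form reproduces the linear part $\sum_{i,j}a_{ij}x_{ij}+\sum_{k,\ell}b_{k\ell}y_{k\ell}$ on feasible solutions, so that Lemma~\ref{lmConNec} applies to the difference, and then reabsorb the correction terms. Your specific choice $\tfrac{1}{n}a_{ij}+\tfrac{1}{m}b_{k\ell}$ is just a concrete instance of the paper's splitting of $a_{ij}$ and $b_{k\ell}$ into numbers $\hat{e}_{ijk},\hat{f}_{ij\ell},\hat{g}_{ik\ell},\hat{h}_{jk\ell}$ with the prescribed sums (namely $\hat{e}_{ijk}=a_{ij}/n$, $\hat{g}_{ik\ell}=b_{k\ell}/m$, $\hat{f}=\hat{h}=0$).
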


\begin{proof}
	Lemma~\ref{lmLinSuff} tells us that \eqref{eqLinThm} is a sufficient condition for $Q$ to be linearizable. To show that this is also a necessary condition we start by following the steps of the proof of Lemma~\ref{lmLinSuff} in reverse.

Let $Q$ be a linearizable cost array. That is, there exist some $A=(a_{ij})$ and $B=(b_{ij})$ such that
	\begin{equation}\label{eqLin}
	\bar{f}(\mb{x},\mb{y)}=\sum_{i=1}^m\sum_{j=1}^m\sum_{k=1}^n\sum_{\ell=1}^n q_{ijk\ell}x_{ij}y_{k\ell} = \sum_{i=1}^m\sum_{j=1}^m a_{ij}x_{ij} + \sum_{k=1}^n\sum_{\ell =1}^n b_{k\ell}y_{k\ell}
\end{equation}
 for every $\mb{x}\in \xx$ and $\mb{y}\in \yy$. For every $i,j\in M$ we can choose $2n$ numbers $\hat{e}_{ijk}$, $\hat{f}_{ijk}$, $k=1,\ldots, n$, such that $\sum_{k=1}^n (\hat{e}_{ijk}+\hat{f}_{ijk})=a_{ij}$. Furthermore, for every $k,\ell\in N$ we can choose $2m$ numbers $\hat{g}_{ik\ell}$, $\hat{h}_{ik\ell}$, $i=1,\ldots, m$, such that $\sum_{i=1}^m (\hat{g}_{ik\ell}+\hat{h}_{ik\ell})=b_{k\ell}$. Now \eqref{eqLin} can be expressed as
\begin{align}
	\bar{f}(\mb{x},\mb{y}) &= \sum_{i=1}^m\sum_{j=1}^m \left(\sum_{k=1}^n \hat{e}_{ijk}+\sum_{\ell=1}^n \hat{f}_{ij\ell}\right) x_{ij} + \sum_{k=1}^n\sum_{\ell =1}^n \left(\sum_{i=1}^m \hat{g}_{ik\ell}+\sum_{j=1}^m \hat{h}_{jk\ell}\right)y_{k\ell}\nonumber\\
 &= \sum_{i=1}^m\sum_{j=1}^m \left(\sum_{k=1}^n\sum_{\ell=1}^n\left(\hat{e}_{ijk}+ \hat{f}_{ij\ell}\right)y_{k\ell}\right) x_{ij} + \sum_{k=1}^n\sum_{\ell =1}^n \left(\sum_{i=1}^m\sum_{j=1}^m\left(\hat{g}_{ik\ell}+ \hat{h}_{jk\ell}\right)x_{ij}\right)y_{k\ell}\nonumber\\
		&= \sum_{i=1}^m\sum_{j=1}^m \sum_{k=1}^n\sum_{\ell=1}^n\left(\hat{e}_{ijk}+ \hat{f}_{ij\ell}\right)x_{ij}y_{k\ell}  + \sum_{i=1}^m\sum_{j=1}^m\sum_{k=1}^n\sum_{\ell =1}^n \left(\hat{g}_{ik\ell}+ \hat{h}_{jk\ell}\right)x_{ij}y_{k\ell}\nonumber\\
	&= \sum_{i=1}^m\sum_{j=1}^m \sum_{k=1}^n\sum_{\ell=1}^n\left(\hat{e}_{ijk}+ \hat{f}_{ij\ell}+\hat{g}_{ik\ell}+ \hat{h}_{jk\ell}\right)x_{ij}y_{k\ell}.\label{eqChar3}
\end{align}
Expression \eqref{eqChar3} implies that
\[
	\sum_{i=1}^m\sum_{j=1}^m \sum_{k=1}^n\sum_{\ell=1}^n\left(q_{ijk\ell}-\left(\hat{e}_{ijk}+ \hat{f}_{ij\ell}+\hat{g}_{ik\ell}+ \hat{h}_{jk\ell}\right)\right)x_{ij}y_{k\ell}=0 \qquad \text{ for all } \mb{x}\in \xx,\ \mb{y}\in \yy.
\]
Now from Lemma~\ref{lmConNec}, it follows that the $m\times m\times n\times n$ array with entries $q_{ijk\ell}-(\hat{e}_{ijk}+ \hat{f}_{ij\ell}+\hat{g}_{ik\ell}+ \hat{h}_{jk\ell})$ is of the form \eqref{eqLinThm}, which implies that $Q$ itself is of the form \eqref{eqLinThm}. This concludes the proof.
\end{proof}

A natural question that arises is the following: Given a BAP cost array $Q$, can we determine whether $Q$ is linearizable, and if so, how can we find its linearization matrices $A$ and $B$? The answer is positive. Namely, consider the system of linear equations given by \eqref{eqLinThm} where $e_{ijk}$, $f_{ij\ell}$, $g_{ik\ell}$ and $h_{jk\ell}$ are the unknowns. This system has $m^2n^2$ equations and $2m^2n+2mn^2$ unknowns. Then $Q$ is linearizable if and only if our system has a solution. Furthermore, the linearization is given by
\[
	a_{ij}:=\sum_{k=1}^n (e_{ijk}+f_{ijk}) \qquad \text{and} \qquad b_{k\ell}:=\sum_{i=1}^m (g_{ik\ell}+h_{ik\ell}).
\]

\begin{corollary}
	Let $Q$ be a cost array of the BAP. Then deciding whether $Q$ is linearizable and finding the linearization matrices $A$ and $B$, can be done in polynomial time.
\end{corollary}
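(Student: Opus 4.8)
The plan is to reduce the problem directly to the solvability of a system of linear equations, invoking the characterization theorem just established. By that theorem, $Q$ is linearizable if and only if it admits a decomposition of the form \eqref{eqLinThm}, i.e.\@ if and only if there exist arrays $E=(e_{ijk})$, $F=(f_{ijk})$, $G=(g_{ijk})$, $H=(h_{ijk})$ with $q_{ijk\ell}=e_{ijk}+f_{ij\ell}+g_{ik\ell}+h_{jk\ell}$ for all $i,j\in M$, $k,\ell\in N$. Treating the entries of $E,F,G,H$ as the unknowns, this is precisely a linear system: one equation per index tuple $(i,j,k,\ell)$, with right-hand side $q_{ijk\ell}$ and left-hand side a sum of four of the unknowns. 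Hence deciding linearizability is exactly deciding consistency of this system.

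Next I would bound the size of the system. There are $m^2n^2$ equations and $2m^2n+2mn^2$ unknowns, and since $m\le n$ both quantities are polynomial in $n$. Every coefficient of the system is $0$ or $1$ (each equation involves four unknowns, each with coefficient $1$), so the coefficient matrix and right-hand side can be written down explicitly in polynomial time and space.

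The core step is then to appeal to the standard fact that consistency of a rational linear system, together with the computation of a particular solution when one exists, can be carried out in polynomial time by Gaussian elimination. The only delicate point is the bit-complexity: one must ensure the intermediate entries produced during elimination do not grow super-polynomially. This is the usual guarantee for exact elimination over the rationals (via fraction-free elimination, or the Edmonds/Bareiss bounds on the bit-sizes of minors), so the whole procedure runs in time polynomial in the input size. I expect this bit-size control to be the only genuinely non-routine ingredient; the rest is bookkeeping.

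Finally, if the system is consistent I would take any returned solution $(E,F,G,H)$ and form the linearization matrices through the closed-form expressions
\[
	a_{ij}:=\sum_{k=1}^n (e_{ijk}+f_{ijk}) \qquad \text{and} \qquad b_{k\ell}:=\sum_{i=1}^m (g_{ik\ell}+h_{ik\ell}),
\]
each computable in $O(m^2n+mn^2)$ additional time; their correctness is exactly the computation carried out in the proof of Lemma~\ref{lmLinSuff}. This shows that both the decision and the construction of $A$ and $B$ are completed in polynomial time, proving the corollary.
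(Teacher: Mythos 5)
Your proposal is correct and follows essentially the same route as the paper: set up the linear system given by \eqref{eqLinThm} in the unknowns $e_{ijk}$, $f_{ij\ell}$, $g_{ik\ell}$, $h_{jk\ell}$ (with $m^2n^2$ equations and $2m^2n+2mn^2$ unknowns), decide its consistency in polynomial time, and recover $A$ and $B$ via the same closed-form sums. Your added remarks on bit-size control in Gaussian elimination are a welcome bit of rigor the paper leaves implicit, but the argument is otherwise identical.
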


\subsection{Cost array of rank one}

Recall that the cost  array $Q$ of BAP can be viewed as an $m^2\times n^2$ cost matrix. The rank of $Q$, when $Q$ is viewed as a matrix, is at most $r$ if there exist some $m\times m$ matrices $A^{^{p}}=(a_{ij}^p)$ and $n\times n$ matrices  $B^{^p}=(b_{ij}^p)$, $p=1,\ldots,r,$ such that
\begin{equation}\label{fact}
	q_{ijk\ell}=\sum_{p=1}^r a_{ij}^pb_{k\ell}^p
\end{equation}
for all $i,j\in M$, $k,\ell\in N$. We say that \eqref{fact} is a \emph{factored form} of $Q$. Then solving a BAP instance $(Q,C,D)$, where matrix $Q$ is of fixed rank $r$, is the problem of minimizing
\begin{equation}\label{BAPfixed}
f(\mb{x},\mb{y})=\sum_{p=1}^r \left(\sum_{i,j=1}^m a_{ij}^px_{ij}\right) \left(\sum_{k,\ell=1}^n b_{k\ell}^py_{k\ell}\right) + \sum_{i,j=1}^m c_{ij}x_{ij} + \sum_{k,\ell=1}^n d_{k\ell}y_{k\ell},
\end{equation}
such that $\mb{x}\in\xx$, $\mb{y}\in\yy$.

\begin{theorem}
	If the $m^2\times n^2$ matrix $Q$ is of rank one and is given in factored form, and  either $C$ or $D$ is a sum matrix,  then the corresponding BAP instance $(Q,C,D)$ is solvable in $O(m^3+n^3)$ time.
\end{theorem}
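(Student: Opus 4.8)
The plan is to exploit the rank-one factorization to decouple the two assignment polytopes, reducing the whole problem to a constant number of linear assignment problems. Since $Q$ has rank one and is given in factored form, we may write $q_{ijk\ell}=a_{ij}b_{k\ell}$, so that the objective in \eqref{BAPfixed} becomes
\[
	f(\mb{x},\mb{y})=\Big(\sum_{i,j=1}^m a_{ij}x_{ij}\Big)\Big(\sum_{k,\ell=1}^n b_{k\ell}y_{k\ell}\Big)+\sum_{i,j=1}^m c_{ij}x_{ij}+\sum_{k,\ell=1}^n d_{k\ell}y_{k\ell}.
\]
Abbreviate $A(\mb{x})=\sum_{i,j}a_{ij}x_{ij}$ and $B(\mb{y})=\sum_{k,\ell}b_{k\ell}y_{k\ell}$. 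By the symmetry of the objective under interchanging the roles of $(\mb{x},A,C,\xx)$ and $(\mb{y},B,D,\yy)$, I would assume without loss of generality that $D$ is a sum matrix. As already observed in the proof of Lemma~\ref{lmLinSuff}, a sum matrix has CVP, so $\sum_{k,\ell}d_{k\ell}y_{k\ell}$ equals a constant $\sigma$ for every $\mb{y}\in\yy$; this is precisely the step that removes the only term coupling $\mb{y}$ to anything other than the bilinear product.

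The key observation is that, once the $D$-term is constant, the value of $\mb{y}$ enters the objective only through the scalar $B(\mb{y})$ multiplied by the scalar $A(\mb{x})$. Hence for any fixed $\mb{x}$, minimizing $A(\mb{x})B(\mb{y})$ over $\mb{y}\in\yy$ amounts to minimizing a scalar multiple of $B(\mb{y})$, whose optimum is attained either at $B_{\min}:=\min_{\mb{y}\in\yy}B(\mb{y})$ (when $A(\mb{x})\ge 0$) or at $B_{\max}:=\max_{\mb{y}\in\yy}B(\mb{y})$ (when $A(\mb{x})<0$). Both $B_{\min}$ and $B_{\max}$, together with the corresponding assignments $\mb{y}^{\min},\mb{y}^{\max}\in\yy$, are computed by two LAPs with cost matrix $B$ in $O(n^3)$ time.

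It then remains to optimize over $\mb{x}$. Using $\min_{\mb{y}}A(\mb{x})B(\mb{y})=\min\{A(\mb{x})B_{\min},\,A(\mb{x})B_{\max}\}$ together with the identity $\min_{\mb{x}}\min\{g_1(\mb{x}),g_2(\mb{x})\}=\min\{\min_{\mb{x}}g_1(\mb{x}),\,\min_{\mb{x}}g_2(\mb{x})\}$, the optimal value equals $\min\{V_{\min},V_{\max}\}+\sigma$, where $V_s=\min_{\mb{x}\in\xx}\sum_{i,j}(a_{ij}B_s+c_{ij})x_{ij}$ for $s\in\{\min,\max\}$. Each $V_s$ is a single LAP with cost matrix $(a_{ij}B_s+c_{ij})$, solvable in $O(m^3)$, and the minimizing $\mb{x}$ paired with the matching extreme $\mb{y}$ realizes the value $\min\{V_{\min},V_{\max}\}+\sigma$, hence yields an optimal $(\mb{x}^*,\mb{y}^*)$. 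The total running time is $O(m^3+n^3)$. The one point requiring care is to make clear that the sum-matrix hypothesis is exactly what the argument needs: without it, the inner minimization over $\mb{y}$ would be a parametric LAP with cost matrix $A(\mb{x})b_{k\ell}+d_{k\ell}$, whose optimal value is a concave piecewise-linear function of the parameter $A(\mb{x})$ with potentially many breakpoints, so the reduction to the two extreme values $B_{\min}$ and $B_{\max}$ would break down.
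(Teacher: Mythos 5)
Your proposal is correct and follows essentially the same route as the paper's proof: use the sum-matrix hypothesis (CVP) to make the $D$-term constant, observe that only the extreme values $B_{\min}$ and $B_{\max}$ of $\sum_{k,\ell} b_{k\ell}y_{k\ell}$ can matter, compute them by two LAPs in $O(n^3)$ time, and then solve two LAPs in $\mb{x}$ with costs $a_{ij}B_s+c_{ij}$ in $O(m^3)$ time each, taking the better of the two. The only cosmetic difference is that the paper argues by a case analysis on the sign of $\sum_{i,j} a_{ij}x^*_{ij}$ at an optimum, whereas you package the same fact as the identity $\min_{\mb{y}}A(\mb{x})B(\mb{y})=\min\{A(\mb{x})B_{\min},A(\mb{x})B_{\max}\}$ followed by exchanging the two minimizations.
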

\begin{proof}
	Let $Q$ be of rank one, i.e.\@ $q_{ijk\ell}=a_{ij}b_{k\ell}$ for all $i,j\in M$, $k,\ell\in N$. Furthermore, without loss of generality, assume that $D$ is a sum matrix. Then there exist $n$-vectors $S=(s_i)$ and $T=(t_i)$ such that  $d_{ij}=s_i+t_j$. Note that the value $\sum_{k,\ell=1}^n d_{k\ell}y_{k\ell}$ is the same for all $\mb{y}\in\yy$, hence solving the BAP instance $(Q,C,D)$ is equivalent to solving the BAP instance $(Q,C,O^n)$, that is, we need to minimize
\begin{equation}\label{BAPone}
f'(\mb{x},\mb{y}):=\left(\sum_{i,j=1}^m a_{ij}x_{ij}\right) \left(\sum_{k,\ell=1}^n b_{k\ell}y_{k\ell}\right) + \sum_{i,j=1}^m c_{ij}x_{ij},
\end{equation}
subject to $\mb{x}\in \xx$ and $\mb{y}\in\yy$.

Let $\mb{x}^*\in\xx$ and $\mb{y}^*\in\yy$ be a solution that minimizes \eqref{BAPone}. Note that if $\sum_{i,j=1}^m a_{ij}x^*_{ij}> 0$, then $\mb{y}^*$ is an assignment that minimizes $\sum_{k,\ell=1}^n b_{k\ell}y_{k\ell}$. Analogously, if $\sum_{i,j=1}^m a_{ij}x^*_{ij}< 0$, then $\mb{y}^*$ is an assignment that maximizes $\sum_{k,\ell=1}^n b_{k\ell}y_{k\ell}$. If $\sum_{i,j=1}^m a_{ij}x^*_{ij}= 0$, then $\mb{y}^*$ can be arbitrary as it does not contributes to the objective value $f'(\mb{x}^*,\mb{y}^*)$.
Hence, we only need to consider assignments $\mb{y}_{\min},\mb{y}_{\max}\in \yy$ that minimize and maximize $\sum_{k,\ell=1}^n b_{k\ell}y_{k\ell}$, respectively. They are found in $O(n^3)$ time. Once $\mb{y}$ is fixed to $\bar{\mb{y}}\in \yy$, minimizing $f'(\mb{x},\bar{\mb{y}})$ reduces to solving the linear assignment problem
\[
f'(\mb{x},\bar{\mb{y}})=\sum_{i,j=1}^m \left(\bar{B}\cdot a_{ij}+c_{ij}\right)x_{ij},
\]
where $\bar{B}:=\sum_{k,\ell=1}^n b_{k\ell}\bar{y}_{k\ell}$. This can be done in $O(m^3)$ time, and we do it for $\bar{\mb{y}}=\mb{y}_{\min}$ and $\bar{\mb{y}}=\mb{y}_{\max}$. Better of the two will give us an optimal solution.
\end{proof}




\section{Approximations}\label{sec:approx}

In the previous section we identified some classes of BAP for which an optimal solution can be found in polynomial time. In this section we develop and analyze various heuristic algorithms for BAP. In our analysis, we use approximation ratio~\cite{sago} and domination ratio~\cite{gp} to measure the quality of a heuristic solution.

\subsection{A rounding procedure}


  Let $(\bar{\mb{x}},\bar{\mb{y}})$ be a feasible solution to the bilinear program  (BALP) obtained by relaxing constraints \eqref{int} of BAP  to $0\leq x_{ij} \leq 1$  for all $i,j$ and $0 \leq y_{k\ell}\leq 1$  for all $k,\ell$. As indicated earlier, BALP is equivalent to BAP in the sense that there exists an optimal solution $(\mb{x},\mb{y})$ to BALP where $\mb{x}\in \xx$ and $\mb{y}\in \yy$. But fractional solutions could also be optimal for BALP (although there always exists an optimal 0-1 solution). We present a simple rounding procedure such that from any solution $(\bar{\mb{x}},\bar{\mb{y}})$ of BALP we can obtain a solution $(\mb{x}^*,\mb{y}^*)$ of BAP such that $f(\mb{x}^*,\mb{y}^*)\leq f(\bar{\mb{x}},\bar{\mb{y}})$. (Note that the notation $f(\mb{x},\mb{y})$ was introduced for 0-1 vectors but naturally extends to any vectors.) This algorithm is useful in developing our FPTAS.
\medskip
   
Given an instance $(Q,C,D)$ and a BALP solution  $(\bar{\mb{x}},\bar{\mb{y}})$, find an optimal solution $\mb{x}^*$ of the LAP defined by the $m\times m$ cost matrix  $H=(h_{ij})$ where
\begin{equation}\label{eq:hround}
h_{ij}=c_{ij}+\sum_{k=1}^n\sum_{\ell=1}^{n}q_{ijk\ell}\bar{y}_{k\ell}.
\end{equation}
Then choose $\mb{y}^*$ to be an optimal solution of the LAP defined by the $n\times n$ cost matrix  $G=(g_{k\ell})$ where
\begin{equation}\label{eq:ground}
g_{k\ell}= d_{k\ell}+\sum_{i=1}^m\sum_{j=1}^{m}q_{ijk\ell}x^*_{ij}.
\end{equation}
The above rounding procedure is called \emph{round-$x$ optimize-$y$} (RxOy), as $\mb{y}^*$ is an optimal 0-1 assignment matrix when $\mb{x}$ is fixed at $\mb{x}^*$, and $\mb{x}^*$ is obtained by ``rounding" $\bar{\mb{x}}$. Naturally, \emph{round-$y$ optimize-$x$} (RyOx) procedure can be defined by swapping the order of operations on $\bar{\mb{x}}$ and $\bar{\mb{y}}$ (we omit the details). Since LAP can be solved in cubic running time, we have that the complexity of RxOy and RyOx procedures is $O(m^2n^2+m^3+n^3)$.

Similar rounding procedures for the unconstrained bipartite boolean quadratic program and bipartite quadratic assignment problem were investigated in \cite{PSK} and \cite{CP15}, respectively.

\begin{theorem}\label{thm:round}
	If a feasible solution $(\mb{x}^*,\mb{y}^*)$ of BAP is obtained by RxOy or RyOx procedure from a feasible solution $(\bar{\mb{x}},\bar{\mb{y}})$ of BALP, then $f(\mb{x}^*,\mb{y}^*)\leq f(\bar{\mb{x}},\bar{\mb{y}})$.
\end{theorem}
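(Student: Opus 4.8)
The plan is to establish the inequality by a two-step argument that peels off one variable block at a time, exploiting the fact that each stage of the rounding procedure solves a linear assignment problem whose integer optimum coincides with its LP-relaxation optimum. Throughout I treat the RxOy procedure; the RyOx case follows by the identical argument with the roles of $\mb{x}$ and $\mb{y}$ interchanged.

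First I would compare $f(\mb{x}^*,\bar{\mb{y}})$ with $f(\bar{\mb{x}},\bar{\mb{y}})$. Fixing $\mb{y}=\bar{\mb{y}}$ in the objective and collecting the coefficient of each $x_{ij}$, one sees that $f(\mb{x},\bar{\mb{y}})=\sum_{i,j}h_{ij}x_{ij}+\sum_{k,\ell}d_{k\ell}\bar{y}_{k\ell}$, where $h_{ij}$ is exactly the cost matrix of \eqref{eq:hround} and the second sum is a constant independent of $\mb{x}$. Since $\mb{x}^*$ minimizes $\sum_{i,j}h_{ij}x_{ij}$ over $\xx$, and since the assignment polytope is integral so that the LAP optimum over $\xx$ equals the optimum of its LP relaxation, the feasible BALP point $\bar{\mb{x}}$ (which lies in that relaxation) cannot beat $\mb{x}^*$; hence $\sum_{i,j}h_{ij}x^*_{ij}\leq\sum_{i,j}h_{ij}\bar{x}_{ij}$, and adding back the constant gives $f(\mb{x}^*,\bar{\mb{y}})\leq f(\bar{\mb{x}},\bar{\mb{y}})$.

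Next I would compare $f(\mb{x}^*,\mb{y}^*)$ with $f(\mb{x}^*,\bar{\mb{y}})$ by the symmetric manoeuvre: fixing $\mb{x}=\mb{x}^*$ yields $f(\mb{x}^*,\mb{y})=\sum_{k,\ell}g_{k\ell}y_{k\ell}+\sum_{i,j}c_{ij}x^*_{ij}$, where $g_{k\ell}$ is the matrix of \eqref{eq:ground} and the $\mb{x}^*$-sum is now the constant term. As $\mb{y}^*$ is an optimal LAP solution for $G$ over $\yy$, the same integrality argument gives $\sum_{k,\ell}g_{k\ell}y^*_{k\ell}\leq\sum_{k,\ell}g_{k\ell}\bar{y}_{k\ell}$, so $f(\mb{x}^*,\mb{y}^*)\leq f(\mb{x}^*,\bar{\mb{y}})$. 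Chaining the two inequalities yields $f(\mb{x}^*,\mb{y}^*)\leq f(\mb{x}^*,\bar{\mb{y}})\leq f(\bar{\mb{x}},\bar{\mb{y}})$, as required.

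The only subtle point, and the step I would take care over, is the comparison of an integer optimum against the fractional feasible point $\bar{\mb{x}}$ (respectively $\bar{\mb{y}}$): this is legitimate precisely because the linear assignment polytope is integral, so restricting to $\xx$ costs nothing relative to optimizing the linear functional over the whole relaxed polytope in which $\bar{\mb{x}}$ sits. Everything else is bookkeeping, namely verifying that after fixing $\mb{y}=\bar{\mb{y}}$ the coefficient of $x_{ij}$ is indeed $h_{ij}$ and that the residual terms are constants, which is a direct rearrangement of the quadratic and linear parts of $f$.
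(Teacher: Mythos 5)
Your proof is correct and follows essentially the same route as the paper's: both establish the chain $f(\mb{x}^*,\mb{y}^*)\leq f(\mb{x}^*,\bar{\mb{y}})\leq f(\bar{\mb{x}},\bar{\mb{y}})$ by rewriting the objective with one block fixed so that the matrices $H$ and $G$ of \eqref{eq:hround} and \eqref{eq:ground} appear, and then invoking integrality of the assignment polytope (the paper phrases this as total unimodularity of the LAP constraint matrix) to compare the integer LAP optimum against the fractional point. The only difference is presentational: you name the intermediate quantity $f(\mb{x}^*,\bar{\mb{y}})$ explicitly, while the paper carries it implicitly through one chain of inequalities.
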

\begin{proof}
		Let $(\bar{\mb{x}},\bar{\mb{y}})$ be a feasible solution of BALP. Then
	\begin{align*}
		f(\bar{\mb{x}},\bar{\mb{y}})&=\sum_{i\in M}\sum_{j\in M}\sum_{k\in N}\sum_{\ell\in N} q_{ijk\ell}\bar{x}_{ij}\bar{y}_{k\ell} + \sum_{i\in M}\sum_{j\in M} c_{ij}\bar{x}_{ij} + \sum_{k\in N}\sum_{\ell\in N} d_{k\ell}\bar{y}_{k\ell}\\
		&=\sum_{i\in M} \sum_{j\in M}\left( \sum_{k\in N}\sum_{\ell\in N} q_{ijk\ell}\bar{y}_{k\ell} + c_{ij}\right) \bar{x}_{ij} + \sum_{k\in N}\sum_{\ell\in N} d_{k\ell}\bar{y}_{k\ell}\\
		&=\sum_{i\in M} \sum_{j\in M}h_{ij} \bar{x}_{ij} + \sum_{k\in N}\sum_{\ell\in N} d_{k\ell}\bar{y}_{k\ell},
	\end{align*}
where $h_{ij}$'s are defined as in \eqref{eq:hround}.
Since the constraint matrix of LAP is totally unimodular, we have that $\sum_{i\in M} \sum_{j\in M}h_{ij} \bar{x}_{ij}\geq \sum_{i\in M} \sum_{j\in M}h_{ij} x^*_{ij}$. Hence
 	\begin{align*}
	f(\bar{\mb{x}},\bar{\mb{y}})&\geq \sum_{i\in M}\sum_{j\in M}\left( \sum_{k\in N}\sum_{\ell\in N} q_{ijk\ell}\bar{y}_{k\ell} + c_{ij}\right) x^*_{ij} + \sum_{k\in N}\sum_{\ell\in N} d_{k\ell}\bar{y}_{k\ell} \\
	&= \sum_{i\in M}\sum_{j\in M} c_{ij}x^*_{ij} + \sum_{k\in N}\sum_{\ell\in N}\left( \sum_{i\in M}\sum_{j\in M} q_{ijk\ell}x^*_{ij} + d_{k\ell}\right) \bar{y}_{k\ell}\\
	&= \sum_{i\in M}\sum_{j\in M} c_{ij}x^*_{ij} + \sum_{k\in N}\sum_{\ell\in N}g_{k\ell} \bar{y}_{k\ell}
	\end{align*}
where $g_{ij}$'s are defined as in \eqref{eq:ground}. Again, we have that $\sum_{k\in N} \sum_{\ell\in N}g_{k\ell} \bar{y}_{k\ell}\geq \sum_{k\in N} \sum_{\ell\in N}g_{k\ell} y^*_{k\ell}$, hence
\begin{align*}
	f(\bar{\mb{x}},\bar{\mb{y}})&\geq \sum_{i\in M}\sum_{j\in M} c_{ij}x^*_{ij} + \sum_{k\in N}\sum_{\ell\in N}\left( \sum_{i\in M}\sum_{j\in M} q_{ijk\ell}x^*_{ij} + d_{k\ell}\right) y^*_{k\ell}\\
	&=\sum_{i\in M}\sum_{j\in M}\sum_{k\in N}\sum_{\ell\in N} q_{ijk\ell}x^*_{ij}y^*_{k\ell} + \sum_{i\in M}\sum_{j\in M} c_{ij}x^*_{ij} + \sum_{k\in N}\sum_{\ell\in N} d_{k\ell}y^*_{k\ell}\\
	&=f(\mb{x}^*,\mb{y}^*).
	\end{align*}
The proof for RyOx works in the same way.
\end{proof}

\subsection{FPTAS for BAP with fixed rank of $Q$}\label{sub:fptas}

Recall that BAP does not admit a polynomial time $\alpha$-approximation algorithm, unless P=NP. However, we now observe that in the case when the cost matrix $Q$ is of fixed rank, there exists an FPTAS. This follows from the results of Mittal and Schulz in \cite{MS12}, where the authors present an FPTAS for a class of related optimization problems.

\begin{theorem}[Mittal, Schulz \cite{MS12}]\label{thm:MS}
Let $k$ be a fixed positive integer, and let $\mathcal{OP}^k$ be an optimization problem
\begin{align*}\label{f}
	\min\quad &g(\mb{z}) = h(E_1^T\mb{z}, E_2^T\mb{z}, \ldots, E_k^T\mb{z})\\
	\text{s.t.}\quad &\mb{z}\in \mathcal{P},
\end{align*}
where $\mathcal{P}\subseteq \R^n$ is a poytope, and function $h\colon \mathbb{R}^k_+\rightarrow \mathbb{R}$ and vectors $E_i\in\R^n$, $i=1,\ldots,k$, are such that the conditions
\begin{itemize}
	\item[(i)] $h(\alpha) \leq h(\beta) \hspace{26pt} \forall \alpha, \beta \in \mathbb{R}^k_+, \text{ s.t. } \alpha_i \leq \beta_i \text{ for all } i = 1, 2, \ldots, k,$
	\item[(ii)] $h(\lambda \alpha) \leq \lambda^c h(\alpha) \hspace{12pt} \forall \alpha \in \mathbb{R}^k_+, \lambda > 1 \text{ and some constant } c$,
	\item[(iii)] $E_i^T \mb{z} > 0 \hspace{38pt} \text{ for } i = 1, 2, \ldots, k \text{ and } \mb{z} \in \mathcal{P},$
\end{itemize}
are satisfied. Then there exists an FPTAS for $\mathcal{OP}^k$.
\end{theorem}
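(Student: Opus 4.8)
The plan is to move to the $k$-dimensional \emph{objective space} obtained by projecting $\mathcal{P}$ through the linear map $\mb{z}\mapsto(E_1^T\mb{z},\ldots,E_k^T\mb{z})$, exploiting the fact that $g$ depends on $\mb{z}$ only through these $k$ coordinates. For each $i$ I would first compute $L_i:=\min_{\mb{z}\in\mathcal{P}}E_i^T\mb{z}$ and $U_i:=\max_{\mb{z}\in\mathcal{P}}E_i^T\mb{z}$ by solving two linear programs over the polytope $\mathcal{P}$; condition (iii) guarantees $L_i>0$. The structural point is that, because $h$ is coordinatewise non-decreasing on $\R^k_+$ by condition (i), minimizing $g$ is the same as pushing the coordinates $E_i^T\mb{z}$ jointly as small as possible, so it suffices to enumerate candidate \emph{upper bounds} on these coordinates.

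Next I would lay down a geometric grid. Fix a parameter $\delta>0$ (chosen below in terms of $\epsilon$ and $c$) and, in each coordinate $i$, take the thresholds $L_i(1+\delta)^0, L_i(1+\delta)^1,\ldots$ up to $U_i$; there are $O\!\left(\tfrac{1}{\delta}\log\tfrac{U_i}{L_i}\right)$ of them. Since $k$ is fixed, the set of grid points $\mb{b}=(b_1,\ldots,b_k)$ has size equal to the product of these counts, which is polynomial as long as each $\log(U_i/L_i)$ is polynomially bounded in the input length. For each grid point $\mb{b}$ I would solve the linear program ``find $\mb{z}\in\mathcal{P}$ with $E_i^T\mb{z}\le b_i$ for all $i$''; if it is feasible I record the value $h(\mb{b})$ and the witness $\mb{z}$, and at the end I return the witness $\mb{z}^\circ$ associated with the feasible grid point $\mb{b}^\circ$ of smallest $h$-value.

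For correctness, let $\mb{z}^*$ be optimal and set $\alpha_i=E_i^T\mb{z}^*$. By construction there is a grid point $\mb{b}$ with $\alpha_i\le b_i\le(1+\delta)\alpha_i$ for all $i$, and its feasibility LP is satisfied by $\mb{z}^*$, so $\mb{b}$ is one of the feasible grid points examined and hence $h(\mb{b}^\circ)\le h(\mb{b})$. The returned solution obeys $E_i^T\mb{z}^\circ\le b_i^\circ$, so condition (i) gives $g(\mb{z}^\circ)\le h(\mb{b}^\circ)\le h(\mb{b})$. Applying (i) once more (since $b_i\le(1+\delta)\alpha_i$) and then the growth bound (ii) yields $h(\mb{b})\le h((1+\delta)\alpha)\le(1+\delta)^c h(\alpha)=(1+\delta)^c g(\mb{z}^*)$. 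Choosing $\delta$ so that $(1+\delta)^c\le 1+\epsilon$, for instance $\delta=\Theta(\epsilon/c)$ for small $\epsilon$, produces a $(1+\epsilon)$-approximation while keeping $1/\delta$ polynomial in $1/\epsilon$.

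The main obstacle is the running-time analysis, which rests on showing the grid has polynomially many points, i.e.\ that each $\log(U_i/L_i)$ is polynomial in the input length. This is exactly where strict positivity (iii) does real work: for rational data the LP optima $L_i,U_i$ are attained at vertices of encoding length polynomial in the input, so $U_i\le 2^{\mathrm{poly}}$ and, crucially, $L_i\ge 2^{-\mathrm{poly}}>0$, making $\log(U_i/L_i)$ polynomial. I expect this bit-complexity estimate, together with the standing assumption that $h$ can be evaluated in polynomial time, to be the delicate part; the geometric-grid-plus-LP-oracle scheme and the $(1+\epsilon)$ guarantee are then routine.
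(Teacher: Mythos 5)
The paper itself does not prove this theorem: it is an imported result, quoted from Mittal and Schulz \cite{MS12}, and the text explicitly defers to that reference for the FPTAS, so there is no in-paper proof to compare against. Your proposal is correct and is essentially a reconstruction of the Mittal--Schulz argument itself --- the geometric $(1+\delta)$-grid in the $k$-dimensional objective space, one feasibility LP per grid point, monotonicity (i) plus the growth condition (ii) yielding the $(1+\delta)^c\le 1+\epsilon$ guarantee, and condition (iii) together with polynomial vertex encoding length ensuring each $\log(U_i/L_i)$ is polynomially bounded --- so nothing needs to be fixed.
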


For the description of the FPTAS see \cite{MS12}.

\begin{corollary}
Let $r$ be a fixed integer. Then there exists an FPTAS for the BAP on a cost matrix $Q$ of rank $r$ \textup{(}see \eqref{BAPfixed}\textup{)}, if $\sum_{i,j=1}^m a_{ij}^px_{ij}>0$, $\sum_{k,\ell=1}^n b_{k\ell}^py_{k\ell}>0$, $p=1,2,\ldots,r$, and $\sum_{i,j=1}^m c_{ij}x_{ij}>0$, $\sum_{k,\ell=1}^n d_{k\ell}y_{e\ell}>0$ are satisfied for all $\mb{x}\in\xx$, $\mb{y}\in\yy$.
\end{corollary}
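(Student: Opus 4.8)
The plan is to apply Theorem~\ref{thm:MS} directly, so the main task is to recast the rank-$r$ BAP objective \eqref{BAPfixed} into the form $h(E_1^T\mb{z},\ldots,E_k^T\mb{z})$ and then verify conditions (i)--(iii). First I would set up the variable vector: let $\mb{z}$ be the concatenation of the entries $x_{ij}$ and $y_{k\ell}$, and take $\mathcal{P}$ to be the product of the two assignment polytopes (the LP relaxations of $\xx$ and $\yy$), which is a polytope as required. The natural linear functionals are the $2r+2$ partial sums appearing in \eqref{BAPfixed}: for each $p=1,\ldots,r$ set $E_p^T\mb{z}=\sum_{i,j}a_{ij}^px_{ij}$ and $E_{r+p}^T\mb{z}=\sum_{k,\ell}b_{k\ell}^py_{k\ell}$, and add two more functionals $\sum_{i,j}c_{ij}x_{ij}$ and $\sum_{k,\ell}d_{k\ell}y_{k\ell}$ for the linear part. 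Thus $k=2r+2$, which is fixed since $r$ is fixed. The outer function is then $h(u_1,\ldots,u_r,v_1,\ldots,v_r,w_1,w_2)=\sum_{p=1}^r u_pv_p + w_1 + w_2$.

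Next I would check the three hypotheses of Theorem~\ref{thm:MS}. Condition (iii) is exactly the positivity assumption stated in the corollary: each $E_i^T\mb{z}>0$ for all $\mb{x}\in\xx$, $\mb{y}\in\yy$, and by a convexity/extreme-point argument this extends from the integral vertices to all of the relaxed polytope $\mathcal{P}$, since each $E_i^T\mb{z}$ is linear and its minimum over $\mathcal{P}$ is attained at a vertex, i.e.\ at an integral assignment. Condition (i), monotonicity of $h$ on $\mathbb{R}^k_+$, is immediate: $h$ is a sum of products of nonnegative coordinates and of the two linear coordinates, so increasing any argument cannot decrease $h$. For condition (ii) I would take the homogeneity constant $c=2$: scaling every argument by $\lambda>1$ scales each product term $u_pv_p$ by $\lambda^2$ and each linear term $w_i$ by $\lambda$, so $h(\lambda\alpha)=\lambda^2\sum u_pv_p + \lambda(w_1+w_2)\leq \lambda^2 h(\alpha)$ because $\lambda\le\lambda^2$ and all terms are nonnegative.

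Once these verifications are in place, the conclusion follows by invoking Theorem~\ref{thm:MS}, which yields an FPTAS for $\mathcal{OP}^k$ and hence for the rank-$r$ BAP. I would close by noting that an FPTAS over the relaxed polytope $\mathcal{P}$ produces a fractional $(\bar{\mb{x}},\bar{\mb{y}})$, which can be converted to a genuine $0$--$1$ BAP solution of no greater objective value using the rounding procedure RxOy of Theorem~\ref{thm:round}; this preserves the approximation guarantee and keeps the overall running time polynomial.

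The step I expect to be the main obstacle is the careful handling of condition (iii) over the whole polytope rather than just at integral points, together with confirming that the homogeneity exponent in (ii) is a genuine constant independent of the input. Both are conceptually routine given the structure of $h$, but they are the places where the positivity hypotheses in the corollary are actually consumed, and one must be sure the bilinear cross-terms $u_pv_p$ do not violate the subhomogeneity bound once the linear terms $w_1,w_2$ are present.
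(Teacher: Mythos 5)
Your proposal is correct and follows essentially the same route as the paper: recast the rank-$r$ objective as an instance of $\mathcal{OP}^k$ over the product of the two assignment polytopes, verify conditions (i)--(iii) of Theorem~\ref{thm:MS} (with the positivity hypotheses extending to the whole polytope via integrality of its vertices), invoke the Mittal--Schulz FPTAS, and round the fractional output with RxOy using Theorem~\ref{thm:round}. The only cosmetic difference is that you use $k=2r+2$ functionals, keeping the $C$- and $D$-parts separate, while the paper merges them into a single functional $E_{2r+1}$ so that $k=2r+1$; your write-up actually spells out the vertex argument for condition (iii) and the constant $c=2$ in condition (ii), which the paper leaves implicit.
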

\begin{proof}
	The relaxations of the problems described in the statement of the corollary fall into the class of problems $\mathcal{OP}^k$ described in Theorem~\ref{thm:MS}. Namely, given such instance $(Q,C,D)$ of BALP, we can express it as an $\mathcal{OP}^k$ where $k:=2r+1$, $h(E_1^T \mb{z},E_2^T \mb{z},\ldots,E_{2r+1}^T \mb{z}):=\sum_{i=1}^r (E_{2i-1}^T \mb{z}) (E_{2i}^T \mb{z}) + E_{2r+1}^T \mb{z}$, polytop $\mathcal{P}\subset \R^{m^2+n^2}$ is the convex hull of $\{\mb{x} \otimes \mb{y}: \mb{x} \in \xx, \mb{y} \in \yy\}$, and
\[
E_i:=
\begin{cases}
	A^{(i+1)/2}\otimes O^n & \text{ if } i \text{ is odd,}\\
	O^m\otimes B^{i/2} & \text{ if } i \text{ is even,}\\
	C\otimes D & \text{ if } i=2r+1,
\end{cases}
\]
where $A^{^p}=(a^p_{ij})$, $B^{^p}=(b^p_{ij})$, $O^\ell$ denotes the $\ell\times \ell$ null-matrix, and $\otimes$ denotes an operation of concatenating $m\times m$ and $n\times n$ matrices into $(m^2+n^2)$-vectors. Furthermore, condition $(iii)$ is mandated in the corollary, which then implies $(i)$. Condition $(ii)$ can be checked easily. Hence, according to Theorem~\ref{thm:MS}, there exist an FPTAS that approximates our BALP with fixed rank matrix $Q$.  Theorem~\ref{thm:round} implies that by rounding fractional FPTAS solutions using RxOy (or RyOx), we get an FPTAS for the corresponding BAP with fixed rank $Q$.
\end{proof}

\subsection{Domination analysis}

The negative result of Theorem~\ref{tc2} precludes potential success in developing approximation algorithms for BAP with constant approximation ratio, unless additional strong assumptions are made. We now show that domination ratio~\cite{gp} can be used to establish some performance guarantee for BAP heuristics.

 \emph{Domination analysis} has been successfully pursued by many researchers to provide performance guarantee of heuristics  for various combinatorial optimization problems~\cite{a1,an1,CP152,gp,gr1,gg1r,gutin3,gutin2,gg2r,h1,k1,kh1,p3,p2,PSK,rb1,sn1,sd2,sn3,t1,v1,z1}. Domination analysis is also linked to exponential neighborhoods~\cite{x1} and very large-scale neighborhood search~\cite{a14,MP09}. Domination analysis results similar to what is presented here have been obtained for the \emph{bipartite quadratic assignment problems}~\cite{CP152} and the \emph{bipartite boolean quadratic programs}~\cite{PSK}.
\medskip

Given an instance $(Q,C,D)$ of a  BAP, let $\Aa(Q,C,D)$ be the average of the objective function values of all feasible solutions.

\begin{theorem}\label{thm:av}
 ${\displaystyle 	\Aa(Q,C,D)=\frac{1}{mn}\sum_{i=1}^m\sum_{j=1}^m\sum_{k=1}^n\sum_{\ell=1}^n q_{ijk\ell}+ \frac{1}{m}\sum_{i=1}^m\sum_{j=1}^mc_{ij}+ \frac{1}{n}\sum_{k=1}^n\sum_{\ell=1}^nd_{k\ell}}$.
\end{theorem}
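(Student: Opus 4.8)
The plan is to compute $\Aa(Q,C,D)$ directly from its definition as an arithmetic mean over the $|\ff| = m!n!$ feasible solutions, and to exploit the fact that the feasible set is a Cartesian product $\ff = \xx \times \yy$. Writing
\[
\Aa(Q,C,D) = \frac{1}{m!\,n!}\sum_{\mb{x}\in\xx}\sum_{\mb{y}\in\yy} f(\mb{x},\mb{y}),
\]
I would split $f$ into its three constituent sums and average each term separately, since averaging is linear. The entire argument then reduces to evaluating, for each term, how often a given decision variable (or product of variables) equals $1$ across all feasible solutions.

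First I would handle the two linear terms via a counting argument. For a fixed index pair $(i,j)$, the number of matrices $\mb{x}\in\xx$ with $x_{ij}=1$ equals the number of permutations of $M$ fixing the assignment $i\mapsto j$, namely $(m-1)!$. Hence $\frac{1}{m!}\sum_{\mb{x}\in\xx} x_{ij} = \frac{(m-1)!}{m!} = \frac{1}{m}$, so the $C$-term contributes $\frac{1}{m}\sum_{i=1}^m\sum_{j=1}^m c_{ij}$ to the average. The identical argument applied to $\yy$ gives $\frac{1}{n!}\sum_{\mb{y}\in\yy} y_{k\ell} = \frac{1}{n}$, so the $D$-term contributes $\frac{1}{n}\sum_{k=1}^n\sum_{\ell=1}^n d_{k\ell}$.

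For the quadratic term I would use the product structure of $\ff$ to factor the double sum:
\[
\frac{1}{m!\,n!}\sum_{\mb{x}\in\xx}\sum_{\mb{y}\in\yy} q_{ijk\ell}\,x_{ij}y_{k\ell}
= q_{ijk\ell}\left(\frac{1}{m!}\sum_{\mb{x}\in\xx} x_{ij}\right)\left(\frac{1}{n!}\sum_{\mb{y}\in\yy} y_{k\ell}\right)
= \frac{1}{mn}\,q_{ijk\ell},
\]
using the two counts established above. Summing over all $i,j,k,\ell$ yields $\frac{1}{mn}\sum_{i,j,k,\ell} q_{ijk\ell}$, and adding the three averaged contributions gives exactly the claimed closed-form expression.

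This proof presents no genuine obstacle; the only point requiring care is the combinatorial count showing that a single variable $x_{ij}$ is set to $1$ in precisely a $\frac{1}{m}$ fraction of the matrices in $\xx$ (and analogously for $\yy$), together with the observation that the sum over $\ff$ factors as a product of a sum over $\xx$ and a sum over $\yy$, which is what allows the average of the \emph{product} $x_{ij}y_{k\ell}$ to split into the product of the individual averages.
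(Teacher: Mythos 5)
Your proof is correct and follows essentially the same approach as the paper, which sketches exactly this counting argument: a cost $q_{ijk\ell}$ contributes to $f(\mb{x},\mb{y})$ if and only if $x_{ij}=y_{k\ell}=1$, which happens for precisely $(m-1)!\,(n-1)!$ of the $m!\,n!$ feasible solutions (the paper states this count directly, while you obtain it by factoring over $\ff=\xx\times\yy$ --- the same fact). Your write-up merely supplies the details the paper omits, including the analogous $(m-1)!$ and $(n-1)!$ counts for the linear terms.
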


The proof of Theorem~\ref{thm:av} follows from the observation that a cost $q_{ijk\ell}$ appears in the objective value sum of $(\mb{x},\mb{y})$ if and only if $x_{ij}=1$ and $y_{k\ell}=1$, and there is exactly $(m-1)!(n-1)!$ such solutions $(\mb{x},\mb{y})$ in $\ff$. We omit details.
\medskip

Consider the fractional solution $(\mb{x},\mb{y})$ where $x_{ij}=1/m$ for all $i,j\in M$, and let $y_{ij}=1/n$ for all $i,j\in N$. Then $(\mb{x},\mb{y})$ is a feasible solution to BALP. It is also easy to see that $f(\mb{x},\mb{y})=\Aa(Q,C,D)$. From Theorem~\ref{thm:round} it follows that a solution $(\mb{x}^*,\mb{y}^*)$ of BAP constructed from $(\mb{x},\mb{y})$ by applying  RxOy or RyOx, satisfies $f(\mb{x}^*,\mb{y}^*)\leq \Aa(Q,C,D)$. Therefore, a BAP feasible solution $(\mb{x}^*,\mb{y}^*)$ that satisfies $f(\mb{x}^*,\mb{y}^*)\leq \Aa(Q,C,D)$ can be obtained in $O(m^2n^2+m^3+n^3)$ time using RxOy or RyOx procedure.
\medskip

A feasible solution $(\mb{x},\mb{y})$ of BAP is said to be no better than average if $f(\mb{x},\mb{y}) \geq A(Q,C,D)$. We will provide a lower bound for the number of feasible solutions that are no better than the average. Establishing corresponding results for QAP  is an open problem~\cite{an1,CP152,gutin2,sn3}.
Given an instance $(Q,C,D)$ of BAP, define
\[\Gg(Q,C,D)=\{(\mb{x},\mb{y})\in\ff\ \colon f(\mb{x},\mb{y})\geq A(Q,C,D)\}.\]

\begin{theorem}\label{thm:dom}
 $|\Gg(Q,C,D)|\geq (m-1)!(n-1)!$.
\end{theorem}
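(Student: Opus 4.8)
The plan is to partition the set $\ff$ of all $m!n!$ feasible solutions into $(m-1)!(n-1)!$ groups, each of size $mn$, in such a way that the average objective value over every group equals $\Aa(Q,C,D)$. Once this is done, each group must contain at least one solution whose objective value is at least the group average, i.e.\@ at least $\Aa(Q,C,D)$; selecting one such solution from every group yields $(m-1)!(n-1)!$ distinct members of $\Gg(Q,C,D)$, which is exactly the claimed bound.

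To build the groups I would identify each $\mb{x}\in\xx$ with a permutation $\sigma$ of $M$ via $x_{i\sigma(i)}=1$, and each $\mb{y}\in\yy$ with a permutation $\pi$ of $N$. For $a\in\{0,\ldots,m-1\}$ and $b\in\{0,\ldots,n-1\}$ define the cyclically shifted permutations $\sigma^{(a)}(i)=((\sigma(i)-1+a)\bmod m)+1$ and $\pi^{(b)}(k)=((\pi(k)-1+b)\bmod n)+1$. Since adding a constant modulo $m$ (resp.\@ $n$) is a bijection, these are again permutations, so this defines an action of $\mathbb{Z}_m\times\mathbb{Z}_n$ on $\ff$, and the group assigned to $(\mb{x},\mb{y})$ is its orbit $\{(\sigma^{(a)},\pi^{(b)}):0\le a<m,\ 0\le b<n\}$. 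The action is free: $\sigma^{(a)}=\sigma$ forces $a\equiv 0\pmod m$ and $\pi^{(b)}=\pi$ forces $b\equiv 0\pmod n$, so every orbit has size exactly $mn$. Hence the orbits partition $\ff$ into $m!n!/(mn)=(m-1)!(n-1)!$ groups.

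The heart of the argument is the averaging computation over a single orbit. For fixed $i,k$, as $a$ ranges over $\mathbb{Z}_m$ the value $\sigma^{(a)}(i)$ ranges over all of $\{1,\ldots,m\}$, and as $b$ ranges over $\mathbb{Z}_n$ the value $\pi^{(b)}(k)$ ranges over all of $\{1,\ldots,n\}$; therefore summing the quadratic term over the orbit collapses $\sum_{a,b}\sum_{i,k}q_{i\sigma^{(a)}(i)k\pi^{(b)}(k)}$ to $\sum_{i,j,k,\ell}q_{ijk\ell}$, and dividing by the orbit size $mn$ reproduces the first term of the formula in Theorem~\ref{thm:av}. For the linear $c$-term, the $b$-sum contributes a factor $n$ while the $a$-shift sweeps each row of $C$ through all of its columns, giving $\tfrac{1}{mn}\cdot n\sum_{i,j}c_{ij}=\tfrac{1}{m}\sum_{i,j}c_{ij}$; the $d$-term is handled symmetrically and yields $\tfrac{1}{n}\sum_{k,\ell}d_{k\ell}$. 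Adding the three pieces shows that the orbit average equals $\Aa(Q,C,D)$ exactly.

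With every orbit having average exactly $\Aa(Q,C,D)$, a maximizer within each orbit has value at least $\Aa(Q,C,D)$ and hence lies in $\Gg(Q,C,D)$; these maximizers are distinct because the orbits are disjoint, so $|\Gg(Q,C,D)|\ge(m-1)!(n-1)!$. I expect the only delicate point to be the bookkeeping of the modular index shifts — in particular verifying that the shift action is well defined and free, and that each coordinatewise sum genuinely sweeps a full row or column so that the orbit average matches Theorem~\ref{thm:av} term by term; the counting of orbits and the final selection step are then routine.
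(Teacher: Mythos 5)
Your proposal is correct and follows essentially the same route as the paper: the paper's equivalence relation $(\mb{x},\mb{y})\sim(\mb{x}',\mb{y}')$ (column-index shifts by $a$ and $b$) is exactly your cyclic-shift action of $\mathbb{Z}_m\times\mathbb{Z}_n$, yielding the same partition into $(m-1)!(n-1)!$ classes of size $mn$ whose class average equals $\Aa(Q,C,D)$, followed by picking a within-class maximizer. The only cosmetic difference is how the class sum is computed — the paper counts, for each class, how many solutions have $x_{ij}=1$, $y_{k\ell}=1$, and $x_{ij}y_{k\ell}=1$, while you sum over the shift parameters $(a,b)$ — but these are the same counting argument.
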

\begin{proof}
	Consider an equivalence relation $\sim$ on $\ff$, where $(\mb{x},\mb{y})\sim(\mb{x}',\mb{y}')$ if and only if there exist $a\in\{0,1,\ldots,m-1\}$ and $b\in\{0,1,\ldots,n-1\}$ such that $x_{ij}=x'_{i(j+a \mod m)}$ for all $i,j$, and $y_{k\ell}=y'_{k(\ell+b \mod n)}$ for all $k,\ell$.  Note that $\sim$ partitions $\ff$ into $(m-1)!(n-1)!$ equivalence classes, each of size $mn$. Fix $S$ to be one such equivalence class. For every $i,j\in M$ and $k,\ell \in N$, there are exactly $n$ solutions in $S$ for which $x_{ij}=1$, and there are exactly $m$ solutions in $S$ for which $y_{k\ell}=1$. Furthermore, for every $i,j\in M$ and $k,\ell\in N$, there is exactly one solution in $S$ for which $x_{ij}y_{k\ell}=1$. Therefore
\[
	\sum_{(\mb{x},\mb{y})\in S}f(\mb{x},\mb{y})=\sum_{\substack{(i,j,k,\ell)\in \\ M\times M\times N \times N}} q_{ijk\ell}+n\sum_{\substack{(i,j)\in\\ M\times M}}c_{ij}+ m\sum_{\substack{(k,\ell)\in\\ N\times N}}d_{k\ell}=mn\Aa (Q,C,D).
\]
Because $|S|=mn$, there must be at least one $(\mb{x},\mb{y})\in S$ such that $f(\mb{x},\mb{y})\geq \Aa(Q,C,D)$. There is one such solution for each of the $(m-1)!(n-1)!$ equivalent classes and this concludes the proof.
\end{proof}

The bound presented in Theorem~\ref{thm:dom} is tight. To see this, let cost arrays $Q,C,D$ be such that all of their elements are 0, except $q_{i'j'k'\ell'}=1$ for some fixed $i',j',k',\ell'$. The tightness follows from the fact that exactly one element from every equivalence class defined by $\sim$ is no better than average.
\medskip

The proof of Theorem~\ref{thm:dom} also provides us a way to construct an $(\mb{x},\mb{y})\in\ff$ such that $f(\mb{x},\mb{y})\leq\Aa(Q,C,D)$. We showed that in every equivalence class defined by $\sim$ there is a feasible solution with the objective function value greater than or equal to  $\Aa(Q,C,D)$. By the same reasoning it follows that in every such class there is a feasible solution with objective function value less than or equal to $\Aa(Q,C,D)$. For example, given $a\in M$, $b\in N$ let $(\mb{x}^a,\mb{y}^b)\in\ff$ be defined as
\[
x_{ij}^a=
\begin{cases}
	1 & \text{if } j=i+a \mod m, \\
	0 & \text{otherwise}
\end{cases}\quad \text{and}\quad
y_{k\ell}^b=
\begin{cases}
	1 & \text{if } \ell=k+b \mod n, \\
	0 & \text{otherwise}.
\end{cases}
\]
Then $(\mb{x}^{a_1},\mb{y}^{b_1})\sim(\mb{x}^{a_2},\mb{y}^{b_2})$ for every $a_1,a_2\in M$ and $b_1,b_2\in N$, and
$$f(\mb{x}^a,\mb{y}^b)=\sum_{i\in M,k\in N} q_{i(i+a\hspace{-3pt} \mod m)k(k+b\hspace{-3pt} \mod n)}+\sum_{i\in M}c_{i(i+a\hspace{-3pt} \mod m)}+\sum_{k\in N}d_{k(k+b\hspace{-3pt} \mod n)}.$$

\begin{corollary}\label{cor:ab_av}
	For any instance $(Q,C,D)$ of BAP 
	\[\min_{a\in M,b\in N}\{f(\mb{x}^a,\mb{y}^b)\}\leq\Aa(Q,C,D)\leq\max_{a\in M,b\in N}\{f(\mb{x}^a,\mb{y}^b)\}.\]
\end{corollary}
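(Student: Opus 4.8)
The plan is to reduce the statement to the elementary fact that the arithmetic mean of a finite collection of real numbers lies between its minimum and its maximum, leaning on the averaging identity that was already established inside the proof of Theorem~\ref{thm:dom}.

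First I would record the structural observation that is essentially spelled out in the paragraph preceding the corollary: the $mn$ solutions $(\mb{x}^a,\mb{y}^b)$, as $a$ ranges over $M$ and $b$ over $N$, are pairwise distinct and all belong to a single equivalence class $S$ of the relation $\sim$. Indeed, the cyclic shifts $\mb{x}^a$ are distinct for distinct $a$, and likewise the $\mb{y}^b$, so the pairs $(a,b)\in M\times N$ index exactly $mn$ distinct feasible solutions. Since these solutions are mutually $\sim$-related (as shown by the displayed relations just before the corollary) and every equivalence class of $\sim$ has size exactly $mn$, these solutions fill out one entire class; that is, $S=\{(\mb{x}^a,\mb{y}^b):a\in M,\ b\in N\}$.

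Next I would invoke the averaging identity from the proof of Theorem~\ref{thm:dom}, which shows that for any equivalence class $S$ one has $\sum_{(\mb{x},\mb{y})\in S} f(\mb{x},\mb{y})=mn\,\Aa(Q,C,D)$. Dividing by $|S|=mn$ shows that $\Aa(Q,C,D)$ is precisely the average of $f$ over the $mn$ solutions $(\mb{x}^a,\mb{y}^b)$. The corollary then follows at once: the average of a finite set of numbers is sandwiched between its smallest and its largest member, and substituting $S=\{(\mb{x}^a,\mb{y}^b):a\in M,\ b\in N\}$ turns this sandwich into the two claimed inequalities.

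There is no genuine obstacle here, since the corollary is an immediate consequence of work already carried out. The only point deserving a moment of care is the verification that the $(\mb{x}^a,\mb{y}^b)$ exhaust a full equivalence class, rather than a proper subset of one, so that the summation formula derived for $S$ in Theorem~\ref{thm:dom} applies to them verbatim; this is exactly what the $\sim$-relations stated in the preceding paragraph, together with the class-size count $mn$, guarantee.
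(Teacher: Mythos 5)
Your proposal is correct and follows essentially the same route as the paper: the paper also treats $\{(\mb{x}^a,\mb{y}^b):a\in M,\ b\in N\}$ as a single equivalence class of $\sim$ and applies the identity $\sum_{(\mb{x},\mb{y})\in S}f(\mb{x},\mb{y})=mn\,\Aa(Q,C,D)$ from the proof of Theorem~\ref{thm:dom}, so that $\Aa(Q,C,D)$ is the average of these $mn$ values and is therefore sandwiched between their minimum and maximum. Your explicit check that these $mn$ solutions are distinct and fill out an entire class is a detail the paper leaves implicit, but it is the same argument.
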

Note that any equivalence class defined by $\sim$ can be used to obtain the type of inequalities above. Corollary~\ref{cor:ab_av} provides another way to find a feasible solution to BAP with objective function value no worse than $\Aa(Q,C,D)$ in $O(m^2n^2)$ time. This is a running time improvement when compared to the approach using RxOy or RyOx described above.
\medskip

Interestingly, unlike the average, computing the median of the objective function values of feasible solutions of BAP is NP-hard. The proof follows by a reduction from PARTITION PROBLEM. We omit details.

For a given instance $I$ of an optimization problem, let $\mb{x}^\Gamma\in\ff(I)$ be a solution produced by an algorithm $\Gamma$, where $\ff(I)$ denotes the set of feasible solutions for the instance $I$. Let $\Gg^\Gamma(I)=\{\mb{x}\in \ff(I)\ \colon f(\mb{x})\geq f(\mb{x}^\Gamma)\}$ where $f(\mb{x})$ denotes the objective function value of $\mb{x}$. Furthermore, let $\mathcal{I}$ be the collection of all instances of the problem with a fixed instance size. Then
\[
	\inf_{I\in\mathcal{I}}\left|\Gg^\Gamma(I)\right| \text{ \ and \ }  \inf_{I\in\mathcal{I}}\frac{|\Gg^\Gamma(I)|}{|\ff(I)|},
\]
are called \emph{domination number} and \emph{domination ratio} of $\Gamma$, respectively~\cite{a1,gp}.

As we discussed above, procedure RxOy (and RyOx) and Corollary~\ref{cor:ab_av} give us two BAP heuristics for which the domination number $(m-1)!(n-1)!$ and the domination ratio $\frac{1}{mn}$ is guarantied by Theorem~\ref{thm:dom}.
Lastly, we give an upper bound on the  domination ratio for any polynomial time heuristic algorithm for the BAP. The result can be shown following the main idea in \cite{PSK,CP15}, and hence we omit the proof.

\begin{theorem}
	For any fixed rational number $\beta>1$ no polynomial time algorithm for BAP can have domination number greater than $m!n!-{\lceil\frac{m}{\beta}\rceil!}{\lceil\frac{n}{\beta}\rceil!}$, unless P=NP.
\end{theorem}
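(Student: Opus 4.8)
The plan is to argue by contradiction: suppose that for some fixed rational $\beta>1$ there is a polynomial time algorithm $\Gamma$ for BAP whose domination number exceeds $m!n!-\lceil\frac{m}{\beta}\rceil!\lceil\frac{n}{\beta}\rceil!$. Writing $K:=\lceil\frac{m}{\beta}\rceil!\lceil\frac{n}{\beta}\rceil!$, the first step is to restate this guarantee in the usable form: since the domination number is an infimum of $|\Gg^\Gamma(I)|$ over all instances, the assumption means that on \emph{every} instance the output $(\mb{x}^\Gamma,\mb{y}^\Gamma)$ satisfies $|\Gg^\Gamma(I)|\geq m!n!-K+1$, equivalently that \emph{at most $K-1$ feasible solutions are strictly better than} $(\mb{x}^\Gamma,\mb{y}^\Gamma)$. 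Thus $\Gamma$ always returns a solution lying among the best $K$ feasible solutions, and the task reduces to showing that producing such a ``top-$K$'' solution is NP-hard.

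The reduction I would use is an amplification-by-padding of an NP-hard BAP. Start from a hard instance on the free blocks: by Theorem~\ref{tc2} there is an NP-hard family of BAP instances $(Q^0,C^0,D^0)$ of some size $p\times q$ with nonnegative $0$--$1$ costs whose optimal value decides an NP-complete problem. Given such an instance, I embed it into a padded instance $(\hat{Q},\hat{C},\hat{D})$ of size $m\times n$ exactly as in the block-forcing construction $\hat{Q},\hat{C},\hat{D}$ used earlier in this section: the costs of $Q^0,C^0,D^0$ are placed on the index block $\{1,\dots,p\}\times\{1,\dots,q\}$, all ``mixed'' linear entries are set to a large $L$, and the remaining entries are $0$. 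Choosing $L$ larger than the maximum objective value of the hard instance forces every \emph{block-respecting} solution (those mapping $\{1,\dots,p\}$ and $\{p+1,\dots,m\}$ into themselves, and likewise for $\yy$) to beat every non-block-respecting solution. The crucial point is that the value of a block-respecting solution depends only on its restriction to the $p\times q$ block, which is a feasible solution of the hard instance, while its complementary part ranges freely over all $(m-p)!(n-q)!$ permutations of the padding indices at \emph{zero} additional cost. Hence each optimal solution of the hard instance lifts to $(m-p)!(n-q)!$ big solutions of equal, minimum value, and every other feasible solution of $(\hat{Q},\hat{C},\hat{D})$ is strictly worse.

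Let $\mathcal{M}$ denote this class of minimum-value solutions, so $|\mathcal{M}|\geq(m-p)!(n-q)!$. The sizes $m,n$ are still ours to choose as functions of $p,q$, and the key design constraint is $(m-p)!(n-q)!\geq K$; it suffices to pick $m,n$ with $m-p\geq\lceil\frac{m}{\beta}\rceil$ and $n-q\geq\lceil\frac{n}{\beta}\rceil$, which by monotonicity of the factorial gives $(m-p)!(n-q)!\geq\lceil\frac{m}{\beta}\rceil!\lceil\frac{n}{\beta}\rceil!=K$. Because $\beta>1$ is a fixed constant, $m-p\geq\lceil\frac{m}{\beta}\rceil$ holds as soon as $m\geq(p+1)\beta/(\beta-1)$, i.e. $m=O(p)$, so the padding is only linear and the whole reduction runs in polynomial time. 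Now the argument closes: on $(\hat{Q},\hat{C},\hat{D})$ the algorithm $\Gamma$ leaves at most $K-1<|\mathcal{M}|$ solutions strictly better than its output, so $(\mb{x}^\Gamma,\mb{y}^\Gamma)$ cannot be strictly worse than every element of $\mathcal{M}$ and must itself lie in $\mathcal{M}$; restricting it to the $p\times q$ block yields an optimal solution of the hard instance in polynomial time, forcing P$=$NP.

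I expect the main obstacle to be the counting step that guarantees $\mathcal{M}$ is large enough to ``shield'' the optimum, namely verifying $(m-p)!(n-q)!\geq K$ simultaneously with keeping $m,n$ polynomial in $p,q$; the delicate case is $\beta$ close to $1$, where the free padding $m-p$ must still dominate $\lceil\frac{m}{\beta}\rceil$, and this is precisely why the choice $m\geq(p+1)\beta/(\beta-1)$ (legitimate because $\beta$ is fixed) is needed. A secondary point to handle carefully is the possibility of several optimal solutions of the hard instance: this only enlarges $\mathcal{M}$ and does not affect the conclusion, since extracting \emph{any} optimal block-restriction already decides the NP-complete problem.
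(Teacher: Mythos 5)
Your proposal is correct, and it follows essentially the approach the paper intends: the paper omits this proof, deferring to the main idea of \cite{PSK,CP15}, which is exactly your padding-amplification argument (embed an NP-hard core instance from Theorem~\ref{tc2} into a larger instance with cost-free padding blocks, so that each optimal core solution lifts to $(m-p)!(n-q)!$ tied optima, and choose $m,n$ linear in $p,q$ so that this count reaches $\lceil\frac{m}{\beta}\rceil!\lceil\frac{n}{\beta}\rceil!$, forcing any algorithm with the assumed domination number to output a lift of an optimal core solution). Your handling of the two delicate points—translating the domination-number hypothesis into ``at most $K-1$ strictly better solutions'' and verifying $m-p\geq\lceil\frac{m}{\beta}\rceil$ with only linear padding for fixed $\beta>1$—is sound.
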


\section{Conclusion}\label{conclusion}

We presented a systematic study of complexity aspects of BAP defined on the data set $(Q,C,D)$ and size parameters $m$ and $n$. BAP generalizes the well known quadratic assignment problem, the three dimensional assignment problem, and the disjoint matching problem. We show that BAP is  NP-hard if $m = O(\sqrt[r]{n})$, for some fixed $r$, but is solvable in polynomial time if $m = O(\sqrt{\log n})$. Further, we establish that BAP cannot be approximated within a constant factor unless P=NP even if $Q$  is diagonal; but when the rank of $Q$ is fixed, BAP is observed to admit FPTAS. When the  rank of $Q$  is one and $C$ or $D$ is a sum matrix, BAP is shown to be solvable in polynomial time. In contrast, QAP with a diagonal cost matrix is just the linear assignment problem which is solvable in polynomial time. We also provide a characterization of BAP instances equivalent to two linear assignment problems and this yields a rich class of polynomially solvable special cases of QAP.  Various results leading to performance guarantee of heuristics from the domination analysis point of view are presented. In particular, we showed that a feasible solution with objective function value no worse than that of $(m-1)!(n-1)!$ solutions can be identified efficiently, whereas  computing a solution whose objective function value is no worse than that of $m!n!-\lceil\frac{m}{\beta}\rceil !\lceil\frac{n}{\beta}\rceil !$ solutions is NP-hard for any fixed rational number $\beta>1$. As a by-product, we have a closed form expression to compute the average of the objective function values of all solutions, whereas the  median of the solution values cannot be identified in polynomial time, unless P=NP.

We are currently investigating efficient heuristic algorithms for BAP from an experimental analysis point of view and the results will be reported in a sequel. We hope that our work will inspire other researchers to explore further on the structural properties and algorithms for this interesting and versatile optimization model.


\section*{Acknowledgment}

This research work was supported by an NSERC discovery grant and an NSERC discovery accelerator supplement awarded to Abraham P. Punnen and an NSERC discovery grant awarded to Binay Bhattacharya.

\end{document}